\documentclass[12pt]{article}  
\pdfoutput=1 	
\usepackage{geometry}                		
\geometry{a4paper}                   		
\usepackage[parfill]{parskip}    		
\usepackage{graphicx}			
								
\usepackage[hyperindex=true,pagebackref,colorlinks=true,pdfpagemode=none,urlcolor=blue,linkcolor=blue,citecolor=blue,pdfstartview=FitH]{hyperref}
\usepackage{amsmath,amsfonts}
\usepackage{color}
\usepackage{amsthm}
\usepackage{boolexpr}
\usepackage{amssymb,latexsym,pstricks,pst-plot}
\usepackage[english]{babel}
\usepackage{pgf}
\usepackage{tikz}
\usetikzlibrary{arrows,automata}
\usepackage[UKenglish]{isodate}
\usepackage[T1]{fontenc}
\usepackage{listings}
\usepackage{xcolor}
\usepackage{bm}
\usepackage{mathabx}
\usepackage{enumitem}
\usepackage{mathtools}
\allowdisplaybreaks

\lstset{%
  language=[LaTeX]TeX,
  backgroundcolor=\color{gray!25},
  basicstyle=\ttfamily,
  breaklines=true,
  columns=fullflexible
}

\setlength{\oddsidemargin}{0pt}
\setlength{\evensidemargin}{0pt}
\setlength{\textwidth}{6.0in}
\setlength{\topmargin}{0in}
\setlength{\textheight}{8.5in}
\setlength{\parindent}{0in}
\setlength{\parskip}{5px}	
\usepackage{amssymb}

\theoremstyle{plain}

\newtheorem{theorem}{Theorem}

\newtheorem{lemma}[theorem]{Lemma}
\newtheorem{proposition}[theorem]{Proposition}

\newtheoremstyle{case}{}{}{}{}{}{:}{ }{}
\theoremstyle{case}
\newtheorem{case}{Case}

\theoremstyle{definition}

\newtheorem{remark}[theorem]{Remark}

\title{Shifting Zeckendorf and Chung-Graham representations}
\author{Rob Burns}

\begin{document}
\maketitle
\begin{abstract}
We re-prove some results about integers whose Zeckendorf and Chung-Graham representations satisfy certain conditions. We use properties of the shift operator and use the software package {\tt Walnut}.
\end{abstract}

\section{Introduction}
\label{intro}
This paper investigates integers whose representation satisfies certain properties such as containing or avoiding a fixed element. The representations discussed here are the Zeckendorf and Chung-Graham representations. Our proofs use properties of the shift operator to reduce a problem over $\mathbb{N}$ to one over a small number of integers. We then use the software package {\tt Walnut} to solve the resulting problem.

We denote the Fibonacci numbers by $F_i, \,\, i\geq 0$. They are defined by the recurrence
$$
F_0 = 0, \,\, F_1 = 1, \,\, F_{m+1} = F_{m} + F_{m-1} \,\, \text{ for } \,\, m \geq 1
$$
and satisfy the Binet formula
$$
F_m = (\phi^m - \psi^m)/\sqrt{5}
$$
where $\phi$ is the golden ratio and $\psi = -1/\phi$.

The Fibonacci numbers can be used as a basis for representing integers in a variety of different ways. Some examples are provided in the paper \cite{Shallit:2023aa} by Shallit and Shan and in the paper by Gilson et al. \cite{Gilson:2020aa}. The most well known system involving the Fibonacci numbers is the Zeckendorf representation, which was published by Lekkerkerker in 1952 \cite{lekk} and by Zeckendorf in 1972 \cite{zeck}.

\bigskip

\begin{theorem}[Zeckendorf's theorem]
Any positive integer $n$ can be expressed uniquely as a sum of Fibonacci numbers 
\begin{equation}
\label{zthm}
n = \sum_{i \geq 0} a_i F_{i+2}
\end{equation}
with $a_i \in \{0,1\}$ and $a_i a_{i+1} = 0$ for all $i$.
\end{theorem}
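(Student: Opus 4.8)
The plan is to prove Zeckendorf's theorem in two parts: existence of the representation and uniqueness. For existence, I would proceed by strong induction on $n$, using a greedy algorithm. For uniqueness, I would use a counting argument based on the key inequality that a sum of Fibonacci numbers with the non-adjacency condition cannot reach the next Fibonacci number.

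Let me think about the existence proof. Given $n \geq 1$, let $F_{k+2}$ be the largest Fibonacci number (among $F_2, F_3, \dots$) that is at most $n$. So $F_{k+2} \leq n < F_{k+3}$. I subtract to get $n - F_{k+2}$. The key claim is that $n - F_{k+2} < F_{k+1}$, which follows because $n < F_{k+3} = F_{k+2} + F_{k+1}$. By strong induction, $n - F_{k+2}$ has a Zeckendorf representation using Fibonacci numbers at most $F_{k+1}$, but since $n - F_{k+2} < F_{k+1}$, the largest term in that representation is at most $F_k$. This means $F_{k+1}$ is not used, so prepending $F_{k+2}$ keeps the non-adjacency condition intact. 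This gives existence.

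For uniqueness, the essential lemma is: if $a_0, a_1, \dots, a_{m}$ satisfy $a_i \in \{0,1\}$, $a_i a_{i+1} = 0$, and $a_m = 1$, then $\sum_{i=0}^{m} a_i F_{i+2} < F_{m+3}$. I would prove this by induction, observing that the largest such sum avoids adjacent terms, and the maximum is achieved by taking $F_{m+2} + F_{m} + F_{m-2} + \cdots$, a telescoping sum that stays below $F_{m+3}$. Given this bound, suppose $n$ has two distinct representations. Looking at the highest index where they differ, one representation uses $F_{k+2}$ while the other's largest term is at most $F_{k+1}$; but then the latter sum is strictly less than $F_{k+2} \leq n$, a contradiction.

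The main obstacle, and the part requiring the most care, is the uniqueness bound $\sum a_i F_{i+2} < F_{m+3}$ when $a_m = 1$. This is where the non-adjacency condition $a_i a_{i+1} = 0$ does the real work: without it the claim is false. I would handle this cleanly by induction on $m$, splitting on whether $a_{m-1} = 0$ (forced, since $a_m = 1$) and reducing to the sum $\sum_{i=0}^{m-2} a_i F_{i+2} < F_{m+1}$, so that the total is at most $F_{m+2} + F_{m+1} = F_{m+3}$ with strict inequality coming from the induction hypothesis. Both the existence induction and this bound hinge on the same recurrence $F_{k+3} = F_{k+2} + F_{k+1}$, so once that is set up the two halves fit together naturally.
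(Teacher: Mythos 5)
Your proposal is correct, but there is nothing in the paper to compare it against: the paper states Zeckendorf's theorem as a known classical result, citing Lekkerkerker and Zeckendorf, and gives no proof at all. Your argument is the standard textbook proof --- existence by the greedy choice $F_{k+2} \leq n < F_{k+3}$ together with strong induction, using $n - F_{k+2} < F_{k+1}$ to guarantee the non-adjacency condition survives, and uniqueness from the bound $\sum_{i=0}^{m} a_i F_{i+2} < F_{m+3}$ when $a_m = 1$, which rests on the identity that the extremal sum $F_{m+2} + F_m + F_{m-2} + \cdots$ equals $F_{m+3} - 1$. All the steps check out, including the one place that needs care: in the inductive proof of the bound, the hypothesis should be applied to the largest index $j \leq m-2$ with $a_j = 1$ (giving a sum $< F_{j+3} \leq F_{m+1}$), with the all-zero tail handled trivially; your sketch handles this correctly by splitting on $a_{m-1} = 0$ and invoking $F_{m+2} + F_{m+1} = F_{m+3}$. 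The uniqueness step --- cancelling the common terms above the highest differing index $k$ and deriving $F_{k+2} \leq \sum_{i \leq k-1} b_i F_{i+2} < F_{k+2}$ --- is likewise sound. So your proof is complete and self-contained, which is more than the paper offers for this statement; it is simply playing a different role here, as the paper takes the theorem as background for its automaton-based results rather than reproving it.
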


\bigskip
The sum given in equation (\ref{zthm}) is denoted $[a_0 a_1 \dots]_F$ and the Zeckendorf representation of an integer $n$ is written $(n)_F = a_0 a_1 a_2 \dots$. For example, $17 = F_2 + F_ 4 + F_7$, so  
$$
(17)_F = 1 0 1 0 0 1 \text{ \, and \, } [1 0 1 0 0 1]_F = 17.
$$

We will use the least significant digit first form for integer representations. We will use exponentiation to indicate concatenation in integer representations. For example, 
$$
(17)_F = 1 0 1 0 0 1 = 1 0 1 0^2 1 = (10)^2 0 1 .
$$

In 1981, Chung and Graham introduced a representation system which only used the even Fibonacci numbers \cite{CG1981}. Following the paper by Chu, Kanji and Vasseur \cite{Chu:2025aa}, we will call this the Chung-Graham decomposition or representation of the integer $n$.

\bigskip

\begin{theorem}[Chung and Graham]
\label{cglemma}
Every non-negative integer $n$ can be uniquely represented as a sum
\begin{equation}
\label{cgthm}
n = \sum_{i \geq 0} a_i F_{i+2} \,\,\, \text{ where } a_i \in \{0, 1, 2 \} \text{ and } \,\, a_i = 0 \text{ if $i$ is odd}
\end{equation}
so that, if $ i$ and $j$ are even with $a_i = a_j = 2$ and $i < j$, then there is some even $k: i < k< j$, for which $a_k = 0$.
\end{theorem}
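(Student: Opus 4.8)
The plan is to reduce the (infinite) existence-and-uniqueness statement to a finite counting problem. Writing $b_j := a_{2j}$ for the only possibly nonzero digits, a Chung--Graham representation becomes a string $b_0 b_1 b_2 \cdots$ over $\{0,1,2\}$ in which any two occurrences of the digit $2$ are separated by at least one $0$, with $n = \sum_{j\ge 0} b_j F_{2j+2}$. Fix a length $m$, let $V_m$ be the set of such valid strings supported on positions $0,\dots,m-1$, and let $\nu(b) = \sum_{j=0}^{m-1} b_j F_{2j+2}$. I would prove that $\nu$ is a bijection from $V_m$ onto $\{0,1,\dots,F_{2m+2}-1\}$; existence and uniqueness over all of $\mathbb{N}$ then follow on letting $m\to\infty$, since by uniqueness the representation of a fixed integer is stable under appending high-order zeros.

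The first step is to count $V_m$. The separation condition is recognised by a two-state automaton: while scanning the digits one records whether, since the most recent $2$, a $0$ has occurred (state \emph{clear}) or not (state \emph{danger}). From \emph{clear} the letters $0,1$ stay clear and $2$ moves to \emph{danger}; from \emph{danger} the letter $0$ returns to clear, $1$ stays, and $2$ is forbidden. Letting $x_m$ and $y_m$ count the length-$m$ valid strings ending in each state, the transitions give $x_{m+1} = 2x_m + y_m$ and $y_{m+1} = x_m + y_m$, so the total $T_m = x_m + y_m$ satisfies $T_{m+1} = 3T_m - T_{m-1}$ with $T_0 = 1$ and $T_1 = 3$. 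This is the even-index Fibonacci recurrence, giving $T_m = F_{2m+2}$, which equals the cardinality of $\{0,\dots,F_{2m+2}-1\}$.

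Next I would establish the sharp bound $\nu(b) \le F_{2m+2}-1$ for every $b \in V_m$. The naive estimate comparing digits fails, because the lower-order digits can sum to more than the top weight, so the separation constraint must genuinely be used. I would carry the same two states through an induction on $m$: let $C_m$ (respectively $D_m$) be the maximum of $\nu$ over valid length-$m$ strings to which a further digit $2$ may (respectively may not) legally be appended at the top. Splitting on the top digit yields $C_m = \max(M_{m-1},\, F_{2m} + C_{m-1})$ and $D_m = \max(2F_{2m} + C_{m-1},\, F_{2m} + D_{m-1})$, where $M_{m-1} = \max(C_{m-1}, D_{m-1})$, and one checks inductively that $C_m = F_{2m+1}-1$ and $D_m = F_{2m+2}-1$, so $\max_{V_m}\nu = F_{2m+2}-1$. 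With this bound injectivity is immediate: if $b \ne b'$ agree above position $j$ and $b_j > b'_j$, then $\nu(b) - \nu(b') \ge F_{2j+2} - (F_{2j+2}-1) = 1 > 0$, using the bound applied to the truncation of $b'$ to positions $0,\dots,j-1$. An injection of $V_m$ into an equinumerous set is a bijection, which completes the argument.

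The main obstacle is the nonlocal nature of the separation rule for the digit $2$: unlike the adjacency condition in Zeckendorf's theorem, it is not a constraint on neighbouring positions, so neither the count nor the extremal bound can be handled position by position. The device that overcomes this is the two-state (\emph{clear}/\emph{danger}) bookkeeping, which renders the rule local again and drives both the count $T_m = F_{2m+2}$ and the extremal recurrences for $C_m$ and $D_m$; pinning down the states and their exact value recurrences is the crux of the proof.
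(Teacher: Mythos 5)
The paper offers no proof of this theorem to compare against: it is quoted as a known result with a citation to Chung and Graham \cite{CG1981}, so your argument must stand on its own --- and it does. I checked the pieces: after the substitution $b_j = a_{2j}$, your two-state automaton correctly localises the separation rule (from \emph{clear} the letters $0,1$ stay, $2$ moves to \emph{danger}; from \emph{danger} only $0$ returns and $1$ stays), the count satisfies $x_{m+1}=2x_m+y_m$, $y_{m+1}=x_m+y_m$, hence $T_{m+1}=3T_m-T_{m-1}$ with $T_0=1$, $T_1=3$, which is indeed the even-indexed Fibonacci recurrence, so $T_m=F_{2m+2}$; the extremal recurrences $C_m=\max(M_{m-1},F_{2m}+C_{m-1})$ and $D_m=\max(2F_{2m}+C_{m-1},F_{2m}+D_{m-1})$ are right, and the induction $C_m=F_{2m+1}-1$, $D_m=F_{2m+2}-1$ closes via $2F_{2m}+F_{2m-1}-1=F_{2m+1}+F_{2m}-1=F_{2m+2}-1$; injectivity then follows from the sharp bound applied to the truncation of $b'$ (prefixes of valid strings are valid, since the constraint is hereditary under removing top digits), and the pigeonhole step finishes $V_m \leftrightarrow \{0,\dots,F_{2m+2}-1\}$. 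This counting-plus-extremal route differs from the classical argument one finds in the literature, which proceeds greedily via the identity $\sum_{j=1}^{m} F_{2j} = F_{2m+1}-1$ and an induction on the largest part; your version buys existence ``for free'' from uniqueness and an exact count, at the cost of having to compute the extremal values $C_m, D_m$ exactly rather than just bounding a tail sum. Two small points to tidy in a final write-up: take $D_0=-\infty$ (no length-$0$ string ends in \emph{danger}), or start the induction at $m=1$ where $C_1=F_3-1=1$ and $D_1=F_4-1=2$; and for uniqueness over all of $\mathbb{N}$, add the one-line remark that any representation of $n$ has support confined to positions $j$ with $F_{2j+2}\le n$, so both representations of a given $n$ land in a common $V_m$ after padding with high-order zeros, which preserves validity.
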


\bigskip

The sum given in equation (\ref{cgthm}) is denoted $[a_0 a_1 \dots]_{CG}$ and the Chung-Graham representation of an integer $n$ is written $(n)_{CG} = a_0 a_1 a_2 \dots$. 

Our historical survey begins with the 1972 paper by Carlitz, Scoville and Hoggatt.\cite{Carlitz_1972} They provided a characterisation of integers having a Zeckendorf representation in which $F_k$ is the smallest Fibonacci number. The characterisation is in terms of compound Wythoff sequences, which are constructed by composing the sequences $\{  \lfloor \phi n \rfloor : n \geq 0 \}$ and $\{ \lfloor \phi^2 n \rfloor : n \geq 0 \}$, where $\phi$ is the golden ratio and $\lfloor . \rfloor$ is the floor function. Kimberling \cite{Kimberling_1983} proved that the set of integers which avoid $F_2$ in their Zeckendorf representation is equal to the set $\{\lfloor \phi  n \rfloor - 1 : n \geq 1 \}$. Griffiths established a formula for the set of integers containing $F_k$ in their Zeckendorf representations. He also established formulae for the sets of integers containing two particular Fibonacci numbers in their Zeckendorf representations. Dekking \cite{https://doi.org/10.5281/zenodo.10803363} examined the set of integers which have a fixed, but arbitrary, start to their Zeckendorf representation. He characterised these integers as generalised Beatty sequences, which are of the form $\{ a \lfloor n \phi \rfloor + b n + c : n \geq 0 \}$ for integers $a,b$ and $c$. Dekking also characterised integers which have a fixed but arbitrary word starting at a given position in their Zeckendorf representation and calculated the asymptotic density of these sets of integers.

Recently, Chu, Kanji and Vasseur considered similar questions in relation to the Chung-Graham representation.\cite{Chu:2025aa} They gave a formula for the set of integers which have a Chung-Graham representation which avoids both $F_k$ and $2 F_k$. They also determined those integers for which $F_k$ (or $2 F_{2k}$) is the smallest Fibonacci number in their Chung-Graham representation. Bustos et al.\cite{Bustos:2025aa} determined the set of integers containing $F_{2k}$ in both their Zeckendorf and Chung-Graham representations.

\bigskip

\section{Preliminaries}

In this paper we will use the software package {\tt Walnut}. Hamoon Mousavi, who wrote the  {\tt Walnut} program, has provided an introductory article \cite{Mousavi:2016aa}. Further information about {\tt Walnut} can be found in Shallit's book \cite{Shallit:2022}.  {\tt Walnut} has an inbuilt facility to implement the Zeckendorf  numeration system. Other representational systems can be added to  {\tt Walnut's} program using instructions provided in section 7.12 of  \cite{Shallit:2022}. In \cite{Burns:2025aa}, we described how the Chung-Graham system can be implemented in  {\tt Walnut}. In that paper we created an automaton, {\tt fibcg}, which converts between the Zeckendorf and Chung-Graham representations. It is shown in Figure \ref{fibcg}. Its input consists of two integers, which are read in parallel. The first, $u$, is a valid Zeckendorf representation and the second, $x$, is a valid Chung-Graham representation. It accepts the pair if and  only if $u = x$ as integers. We tell {\tt Walnut} that an integer is in least-significant digit first Chung-Graham format by including the signifier  {\tt ?lsd\_cg} within an instruction.

The automata in this paper will accept integer representations in least significant digit first (lsd) order unless otherwise stated. 

For $k \geq 2$, we define sets $A_k$ based on the Zeckendorf representation of an integer. The set $A_k$ consists of those integers $n$ which have Zeckendorf representation $n = \sum_{i = 1}^m F_{k_i}$ with $k = k_1 < k_2 < \dots < k_m$. The sets $A_k$ give a partition of the positive integers. We define the sets $F_{even}$ and $F_{odd}$ by
\begin{equation}
\label{foe}
F_{even} = \bigcup_{k \geq 1} A_{2k}  \,\, \text{  and  } \,\, F_{odd} = \bigcup_{k \geq 1} A_{2k+1}.
\end{equation}

So, if an integer $n$ has Zeckendorf representation $n = \sum_{i = 1}^m F_{k_i}$ where $k_1 < k_2 < \dots < k_m$, then 
$$
n \in F_{odd} \text{   if }  k_1 \text{   is odd and  }   n \in F_{even} \text{  if }  k_1 \text{   is even.}
$$

These sets can be constructed within {\tt Walnut} using regular expressions.
\begin{verbatim}
reg fibeven lsd_fib "(00)*1(0|1)*":
reg fibodd lsd_fib "0(00)*1(0|1)*":
\end{verbatim}

\bigskip

\begin{figure}[htbp]
   \begin{center}
    \includegraphics[width=6in]{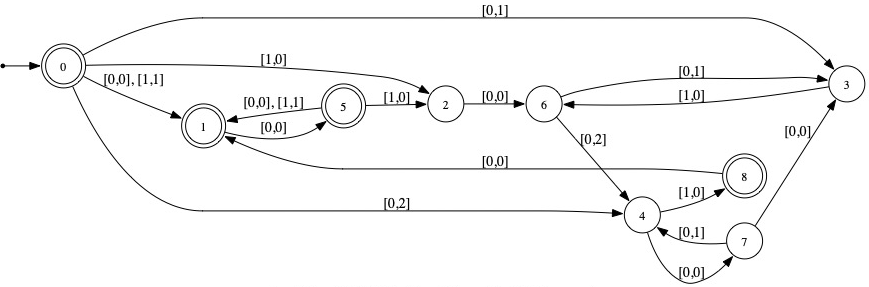}
    \end{center}
    \caption{Automaton which converts between the Zeckendorf and Chung-Graham representations.}
    \label{fibcg}
\end{figure}

We will use a number of automata which appear in theorem 10.11.1 of \cite{Shallit:2022}. Firstly, the sequence  $\{ \lfloor n \phi \rfloor : n \geq 0 \}$ is Fibonacci-synchronised. A least significant digit first synchronising automaton, which we call {\tt phinlsd}, is shown in figure \ref{phinlsd}. The sequence $\{ \lfloor n /\phi \rfloor : n \geq 0 \}$ is also Fibonacci-synchronised. A synchronising automaton for this sequence, which we call {\tt noverphilsd}, is shown in figure \ref{noverphilsd}. 

\bigskip

\begin{figure}[htbp]
   \begin{center}
    \includegraphics[width=6in]{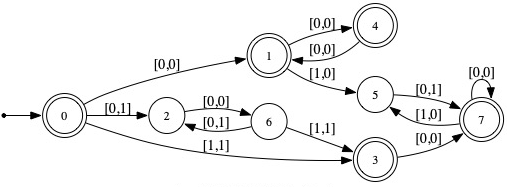}
    \end{center}
    \caption{Synchronised automaton for the function $\lfloor n \phi \rfloor$.}
    \label{phinlsd}
\end{figure}

\bigskip

\begin{figure}[htbp]
   \begin{center}
    \includegraphics[width=6in]{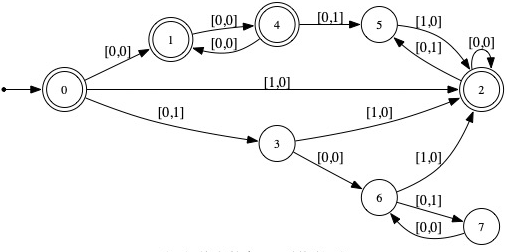}
    \end{center}
    \caption{Synchronised automaton for the function $\lfloor n /\phi \rfloor$.}
    \label{noverphilsd}
\end{figure}

\bigskip

\section{The shift operator}

Since the shift operator plays a significant part in our proofs, we will collect properties of the operator in this section. For least significant digit first integer representations, the shift operator moves a representation to the right and inserts a $0$ at the beginning of the representation. For example, the Zeckendorf representation of the integer $10 = [01001]_F$ becomes $16 = [001001]_F$ under a shift. The single shift of the Chung-Graham representation of an integer does not produce a valid representation because the Chung-Graham system only uses the even-indexed Fibonacci numbers. Shifting two places to the right does produce a valid representation of an integer in the Chung-Graham system. We denote the shift operator on Zeckendorf representations by ${\tt sh_F}$ and on Chung-Graham representations by ${\tt sh_{CG}}$. The shift operator ${\tt sh_F}$ acts on the sets $A_k$ by 
\begin{equation}
\label{shak}
{\tt sh_F} (A_k) = A_{k+1} \,\, \text{  for  } \,\, k \geq 2.
\end{equation}

In {\tt Walnut} the shift operations can be computed from a regular expression.
\begin{verbatim}
reg fibshift {0,1} {0,1} "([0,0]|([1,0][1,1]*[0,1]))*":
reg cgshift {0,1,2} {0,1,2} "([0,0]|([1,0][0,1])|([2,0][0,2]))*":
\end{verbatim}

Shifting two places can then be defined as follows:
\begin{verbatim}
def fibsh2 "?lsd_fib Ey $fibshift(x,y) & $fibshift(y,z)":
def cgsh2 "?lsd_cg Ey $cgshift(x,y) & $cgshift(y,z)":
\end{verbatim}

\bigskip

The following lemma connects the double shift of the Zeckendorf representation with the double shift of the Chung-Graham representation.

\bigskip

\begin{lemma}
\label{fibcgsh}
Shifting an integer $n$ two places to the right in the Zeckendorf and Chung-Graham representations produces the same result if and only if $n \in F_{even}$. In general we have;
\begin{equation*}
{\tt sh_{CG}^2} (n) = [00(n)_{CG}]_{CG} = 
\begin{cases}
{\tt sh_F^2} (n) = [00(n)_{F}]_{F} & \text{   if   }  \,\, n \in F_{even}\\
{\tt sh_F^2} (n)  + 1 = [00(n)_{F}]_{F} + 1  &  \text{   if   } \,\, n \in F_{odd}
\end{cases}
\end{equation*}
\end{lemma}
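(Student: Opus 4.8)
The plan is to collapse both double shifts into a single closed form $\phi^2 n + (\text{bounded error})$ via the Binet formula, and then read off the two cases from the \emph{sign} of the error. Writing the Zeckendorf expansion as $n=\sum_j F_{k_j}$ with $2\le k_1<k_2<\cdots$, the identity $F_{k+2}=\phi^2 F_k+\psi^k$ (immediate from Binet, since $\phi^2-\psi^2=\sqrt5$) gives ${\tt sh_F^2}(n)=\sum_j F_{k_j+2}=\phi^2 n+Q$ with $Q=\sum_j\psi^{k_j}$. The same computation applied to the Chung--Graham expansion $n=\sum_m c_m F_{2m}$, with $c_m\in\{0,1,2\}$ subject to the $2$-separation condition of Theorem \ref{cglemma}, yields ${\tt sh_{CG}^2}(n)=\phi^2 n+Q'$ with $Q'=\sum_m c_m\psi^{2m}$. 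Since $Q={\tt sh_F^2}(n)-\phi^2 n$ and $Q'={\tt sh_{CG}^2}(n)-\phi^2 n$ are both an integer minus $\phi^2 n$, the difference ${\tt sh_{CG}^2}(n)-{\tt sh_F^2}(n)=Q'-Q$ is an integer and $Q,Q'$ share the fractional part $\{-\phi^2 n\}$; it therefore suffices to locate $Q$ and $Q'$ inside unit intervals.

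Next I would control the error terms by elementary geometric estimates, using $|\psi|=1/\phi$ and $1-\psi^2=|\psi|$. Every term $c_m\psi^{2m}$ is positive, so $Q'>0$, and the $2$-separation condition caps the density of the digit $2$; a short summation then gives $Q'<1$, so $Q'\in(0,1)$. For $Q$ the sign of the leading term $\psi^{k_1}$ is decided by the parity of $k_1$: if $n\in F_{even}$ then $k_1$ is even, and bounding the tail (indices $\ge k_1+2$) against $\psi^{k_1}$ yields $0<Q<1$; if $n\in F_{odd}$ then $k_1$ is odd and the same estimates yield $-1<Q<0$. Equivalently, these bounds say that ${\tt sh_{CG}^2}(n)=\lfloor\phi^2 n\rfloor+1$ for every $n\ge1$, while ${\tt sh_F^2}(n)=\lfloor\phi^2 n\rfloor+1$ when $n\in F_{even}$ and ${\tt sh_F^2}(n)=\lfloor\phi^2 n\rfloor$ when $n\in F_{odd}$.

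Combining the intervals finishes the argument. When $n\in F_{even}$ both $Q$ and $Q'$ lie in $(0,1)$, so the integer $Q'-Q$ has absolute value below $1$ and must vanish, giving ${\tt sh_{CG}^2}(n)={\tt sh_F^2}(n)$; when $n\in F_{odd}$ we have $Q'\in(0,1)$ and $Q\in(-1,0)$, so $Q'-Q\in(0,2)\cap\mathbb{Z}=\{1\}$, giving ${\tt sh_{CG}^2}(n)={\tt sh_F^2}(n)+1$. Since $F_{even}$ and $F_{odd}$ partition the positive integers, the biconditional (equality precisely for $n\in F_{even}$) is immediate. I expect the one genuinely delicate point to be the bound $Q'<1$: unlike the Zeckendorf estimate it is not a pure geometric series but relies essentially on the $2$-separation condition, the extremal configuration (a single leading $2$ followed by all $1$'s) attaining the value $1$ only in the infinite limit. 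As a cross-check, and in keeping with the rest of the paper, each of the three floor formulas above can be verified directly in {\tt Walnut} from the automata {\tt fibsh2}, {\tt cgsh2}, {\tt fibcg}, {\tt phinlsd}, {\tt fibeven} and {\tt fibodd}, giving an independent purely computational proof.
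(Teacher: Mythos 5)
Your argument is correct, but it is a genuinely different route from the paper's: the paper proves Lemma \ref{fibcgsh} purely mechanically in {\tt Walnut}, with no Binet-style analysis at all. It defines a predicate {\tt shdiff}$(m,z)$ that synchronises $m$ with its Chung--Graham form via {\tt fibcg}, applies {\tt fibsh2} and {\tt cgsh2}, and asserts ${\tt sh_{CG}^2}(m) = {\tt sh_F^2}(m) + z$; the lemma is then the pair of TRUE evaluations $\forall m\,(\mathtt{fibeven}(m) \Leftrightarrow \mathtt{shdiff}(m,0))$ and $\forall m\,(\mathtt{fibodd}(m) \Leftrightarrow \mathtt{shdiff}(m,1))$. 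So the ``independent purely computational proof'' you offer as a cross-check in your last sentence is in fact the paper's entire proof, and your Binet computation is the new content. What your route buys: it explains the dichotomy conceptually (the sign of $Q = \sum_j \psi^{k_j}$ is governed by the parity of the least Zeckendorf index, while $Q' = \sum_m c_m \psi^{2m} > 0$ automatically because only even powers of $\psi$ occur), it yields the sharper closed forms ${\tt sh_{CG}^2}(n) = \lfloor \phi^2 n \rfloor + 1$ for $n \geq 1$ and ${\tt sh_F^2}(n) = \lfloor \phi^2 n \rfloor + 1$ or $\lfloor \phi^2 n \rfloor$ according as $n \in F_{even}$ or $n \in F_{odd}$ (refining (\ref{shift})), and it is independent of the correctness of the {\tt fibcg} automaton and of the software; what the paper's route buys is machine-checked certainty with none of your delicate estimates, in the same style as its other proofs. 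Your Zeckendorf bounds are fine as sketched (the tail after $k_1$ is at most $|\psi|^{k_1+2}/(1-\psi^2) = |\psi|^{k_1+1}$, giving $Q \in (0,1)$ for $k_1$ even and $Q \in (-1,0)$ for $k_1 \geq 3$ odd), and the one step you flag as delicate, $Q' < 1$, does close cleanly: write $c_m = 1 + d_m$ with $d_m \in \{-1,0,1\}$; the Chung--Graham condition places a $-1$ strictly between any two $+1$'s, and since $\psi^{2q} > \psi^{2p}$ for $q < p$ these pairs contribute negatively, so $\sum_m d_m \psi^{2m} \leq \psi^{2}$, while for a finite representation supported on $\{1,\dots,M\}$ one has $\sum_{m=1}^{M} \psi^{2m} < \psi^2/(1-\psi^2) = 1/\phi$; hence $Q' < 1/\phi + \psi^2 = 1$ strictly, confirming that your extremal configuration (a leading $2$ followed by all $1$'s) is the supremum and is attained only in the infinite limit. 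One pedantic point, shared with the paper's statement: at $n = 0$ both double shifts give $0$ while $0 \notin F_{even}$, so the biconditional (and your $Q' > 0$) should be read over positive integers, consistent with $F_{even} \cup F_{odd}$ partitioning $\mathbb{N} \setminus \{0\}$.
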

\begin{proof}
The proof can be completed using {\tt Walnut}. The function {\tt shdiff} measures the difference between shifting two places in the Zeckendorf and Chung-Graham representations.
\begin{verbatim}
def shdiff "?lsd_fib En,r,w,x $fibcg(m,?lsd_cg w) & $fibsh2(m,n) &
     $cgsh2(?lsd_cg w,?lsd_cg x) & $fibcg(r, ?lsd_cg x) & r = n + z":
\end{verbatim}
The following propositions are TRUE, establishing the lemma.
\begin{verbatim} 
eval tcf "?lsd_fib Am ($fibeven(m) <=> $shdiff(m,0))":
eval tcf "?lsd_fib Am ($fibodd(m) <=> $shdiff(m,1))":
\end{verbatim}
\end{proof}

\bigskip

We will need a few identities satisfied by the Fibonacci numbers. In general we have for $m \geq 1$:
\begin{equation}
\label{phif}
F_m \phi  = F_{m+1} - (-\phi)^{-m} \,\, \text{  and  } \,\, F_m \phi^2 = F_{m+2} - (-\phi)^{-m} .
\end{equation}

If $n \geq 0$ is an integer, then (see \cite{Shallit:2022} Theorem 10.11.1),
\begin{align}
\label{shift}
{\tt sh_F} (n)  =& \lfloor  (n+1) \phi \rfloor - 1 \\
{\tt sh_F^2} (n) _F =& \lfloor  (n+1) \phi^2 \rfloor - 2 . 
\end{align}

\bigskip

The following lemmas describe the action on the shift operator on various forms of integers. 

\bigskip

\begin{lemma}
\label{sh1}
Let $x = (n+1) F_m + F_{m-1} \lfloor n/\phi \rfloor$ where $n \geq 0$. Then, if $m > 3$,
$$
{\tt sh_F} (x)  =  (n+1) F_{m+1} + F_{m} \lfloor n/\phi \rfloor.
$$
\end{lemma}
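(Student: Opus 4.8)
The plan is to apply the closed form ${\tt sh_F}(x) = \lfloor (x+1)\phi\rfloor - 1$ from equation (\ref{shift}) and thereby reduce the claim to a statement about the fractional part of $(x+1)\phi$. Writing $q = \lfloor n/\phi\rfloor$, so that $x = (n+1)F_m + F_{m-1}q$, I would first expand $(x+1)\phi$ using the identities in (\ref{phif}), namely $F_m\phi = F_{m+1} - (-\phi)^{-m}$ and $F_{m-1}\phi = F_m - (-\phi)^{-(m-1)}$. This gives
\[
(x+1)\phi = (n+1)F_{m+1} + qF_m + E,
\]
where the integer part $(n+1)F_{m+1} + qF_m$ is exactly the target value and $E = \phi - (n+1)(-\phi)^{-m} - q(-\phi)^{-(m-1)}$ collects the remaining, non-integer contributions. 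Since the integer part already matches the right-hand side of the lemma, the whole statement reduces to showing $\lfloor E\rfloor = 1$, i.e.\ $1 \le E < 2$: then $\lfloor (x+1)\phi\rfloor = (n+1)F_{m+1} + qF_m + 1$, and subtracting $1$ yields the asserted formula.

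The key simplification, and the step that makes the estimate feasible, is that the dependence on $n$ in $E$ cancels. Writing $q = n/\phi - \theta$ with $\theta = \{n/\phi\}\in[0,1)$ and using $(-\phi)^{-(m-1)} = -\phi\,(-\phi)^{-m}$, one computes $(n/\phi)(-\phi)^{-(m-1)} = -n(-\phi)^{-m}$, so the two terms linear in $n$ cancel and
\[
E = \phi - (-\phi)^{-m} + \theta\,(-\phi)^{-(m-1)}.
\]
Thus $E$ depends only on $m$ and on the bounded quantity $\theta$, not on the size of $n$; this is what allows a single uniform bound to cover all $n \ge 0$ simultaneously.

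It then remains to bound $E$ and confirm $1 \le E < 2$, which I expect to be the main obstacle, since it is precisely here that the hypothesis $m > 3$ enters. Because $|(-\phi)^{-m}| = \phi^{-m}$ and $|(-\phi)^{-(m-1)}| = \phi^{-(m-1)}$ decay geometrically while the leading term is $\phi \approx 1.618$, I would split on the parity of $m$. For even $m$ both correction terms are non-positive, so $E \le \phi < 2$, while the minimum (at $m = 4$, $\theta \to 1$) gives $E > 2\phi - 2 > 1$. For odd $m$ both corrections are non-negative, so $E \ge \phi > 1$, while the maximum (at $m = 5$) gives $E < 3\phi - 3 < 2$. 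Inspecting the excluded small cases confirms the threshold is sharp: at $m = 2$ the bound drops below $1$ and at $m = 3$ it rises above $2$, so $m > 3$ is exactly what is required. With $1 \le E < 2$ established in both parities we obtain $\lfloor E\rfloor = 1$, and the identity follows. The delicate points throughout are the sign bookkeeping for the powers $(-\phi)^{-m}$ and the identification of the extremal values of $m$ and $\theta$ in each parity case.
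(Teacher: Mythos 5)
Your proposal is correct and follows essentially the same route as the paper: both apply ${\tt sh_F}(x) = \lfloor (x+1)\phi\rfloor - 1$, expand via the identities $F_m\phi = F_{m+1} - (-\phi)^{-m}$, and show the residual term has floor $1$ by splitting on the parity of $m$, with the hypothesis $m>3$ entering exactly where you say. Your substitution $q = n/\phi - \theta$, which cancels the $n$-dependence exactly, is a slightly tidier piece of bookkeeping than the paper's bound $s\phi < n < (s+1)\phi$ yielding $|{\cdot}| < \phi^{-m+2}$, but it is a cosmetic variation, not a different argument.
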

\begin{proof}
From (\ref{shift}):
$$
{\tt sh_F} (x)  = \lfloor  (x+1) \phi \rfloor - 1 
$$
and from (\ref{phif})
\begin{align*}
\lfloor  (x+1) \phi \rfloor &= \lfloor  ((n+1) F_m + F_{m-1} \lfloor n/\phi \rfloor + 1) \phi \rfloor  \\
 &= (n+1) F_{m+1} + F_{m} \lfloor n/\phi \rfloor + \lfloor  \phi - (n+1) (-\phi)^{-m} - \lfloor n/\phi \rfloor (-\phi)^{-m+1}  \rfloor .
\end{align*}

Suppose $\lfloor n/\phi \rfloor = s$. Then $s \phi < n < (s+1) \phi$ and
\begin{align*}
\mid  (n+1) (-\phi)^{-m} & + \lfloor n/\phi \rfloor (-\phi)^{-m+1}  \mid  \, =  \, (n+1) \phi^{-m} - \lfloor n/\phi \rfloor \phi^{-m+1} \\
 & < ((s+1) \phi + 1)\phi^{-m} - s \phi^{-m+1} = \phi^{-m+1} + \phi^{-m} = \phi^{-m+2} .
 \end{align*}
 
If $m$ is even and $\geq 4$, then $(n+1) (-\phi)^{-m}  + \lfloor n/\phi \rfloor (-\phi)^{-m+1} > 0$ and
$$
2 > \phi >  \phi - (n+1) (-\phi)^{-m} - \lfloor n/\phi \rfloor (-\phi)^{-m+1} \geq \phi - \phi^{-2} > 1 .
$$

If $m$ is odd and $\geq 4$, then $(n+1) (-\phi)^{-m}  + \lfloor n/\phi \rfloor (-\phi)^{-m+1} < 0$ and
$$
\phi <  \phi - (n+1) (-\phi)^{-m} - \lfloor n/\phi \rfloor (-\phi)^{-m+1} < \phi + \phi^{-3}  < 2 .
$$
 In both cases 
$$
 \lfloor   \phi - (n+1) (-\phi)^{-m} - \lfloor n/\phi \rfloor (-\phi)^{-m+1}  \rfloor = 1 .
$$
The lemma follows.
\end{proof}

\bigskip

\begin{lemma}
\label{sh2}
Let $x =  \lfloor (n+ \phi^2)/\phi \rfloor F_m + n F_{m+1}$  where $n \geq 0$. Then, if $m \geq 2$,
$$
{\tt sh_F} (x)  =   \lfloor (n+ \phi^2)/\phi \rfloor F_{m+1} + n F_{m+2}.
$$
\end{lemma}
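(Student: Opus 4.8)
The plan is to mirror the proof of Lemma \ref{sh1}. Writing $t := \lfloor (n+\phi^2)/\phi \rfloor$ so that $x = t F_m + n F_{m+1}$, I would start from the shift formula (\ref{shift}), namely ${\tt sh_F}(x) = \lfloor (x+1)\phi \rfloor - 1$, and expand $(x+1)\phi$ using the identity $F_m\phi = F_{m+1} - (-\phi)^{-m}$ from (\ref{phif}) applied to both $F_m$ and $F_{m+1}$. This gives
\[
(x+1)\phi = t F_{m+1} + n F_{m+2} + \bigl(\phi - t(-\phi)^{-m} - n(-\phi)^{-(m+1)}\bigr),
\]
so that the lemma reduces to showing that the error term $E := \phi - t(-\phi)^{-m} - n(-\phi)^{-(m+1)}$ satisfies $\lfloor E \rfloor = 1$: the leading part $t F_{m+1} + n F_{m+2}$ is already the claimed value, and subtracting the $1$ coming from (\ref{shift}) exactly cancels $\lfloor E\rfloor$.

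The key simplification, which I expect to be the crux, is to collect the two error contributions. Since $(-\phi)^{-m} = (-1)^m\phi^{-m}$ and $(-\phi)^{-(m+1)} = -(-1)^m\phi^{-m-1}$, the combined term factors cleanly as
\[
t(-\phi)^{-m} + n(-\phi)^{-(m+1)} = (-1)^m \phi^{-m}\,\bigl(t - n/\phi\bigr).
\]
The definition of $t$ then lets me control $u := t - n/\phi$ precisely: because $n/\phi + \phi = (n+1)\phi - n$ is irrational, $t = \lfloor n/\phi + \phi\rfloor$ gives $u = \phi - \{n/\phi + \phi\}$, whence $u \in (\phi - 1, \phi)$. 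Thus $E = \phi - (-1)^m\phi^{-m} u$ with $0 < u < \phi$, and the whole problem is reduced to a one-variable estimate.

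It then remains to split on the parity of $m$ and use $m \geq 2$. When $m$ is even, $E = \phi - \phi^{-m}u < \phi < 2$, while $\phi^{-m}u < \phi^{-m+1} \leq \phi^{-1} = \phi - 1$ (here $m \geq 2$ is used) forces $E > 1$. When $m$ is odd, so $m \geq 3$, I have $E = \phi + \phi^{-m}u > \phi > 1$, and using the identity $2 - \phi = \phi^{-2}$ together with $u < \phi \leq \phi^{m-2}$ yields $\phi^{-m}u < \phi^{-2} = 2 - \phi$, hence $E < 2$. In both cases $\lfloor E\rfloor = 1$, and the lemma follows. The main obstacle is precisely this parity bookkeeping: the sign of the error term flips with $m$, so the two estimates must be arranged to land $E$ strictly inside $(1,2)$, and it is the hypothesis $m \geq 2$ that makes both the even and the odd bound tight enough.
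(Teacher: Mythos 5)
Your proposal is correct and follows essentially the same route as the paper's proof: both start from ${\tt sh_F}(x)=\lfloor(x+1)\phi\rfloor-1$, expand via the identities in (\ref{phif}), and show the error term lies strictly in $(1,2)$ by splitting on the parity of $m$, with the same bound $\phi^{-m+1}$ on the combined correction. Your factoring of the error as $(-1)^m\phi^{-m}\bigl(t-n/\phi\bigr)$ with $t-n/\phi=\phi-\{n/\phi+\phi\}\in(\phi-1,\phi)$ is just a cleaner packaging of the paper's two-sided inequality $s\phi-\phi^2<n<(s+1)\phi-\phi^2$, not a different argument.
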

\begin{proof}
From (\ref{shift}):
$$
{\tt sh_F} (x)  = \lfloor  (x+1) \phi \rfloor - 1 
$$
and from (\ref{phif})
\begin{align*}
\lfloor  (x+1) \phi \rfloor &= \lfloor  (\lfloor (n+ \phi^2)/\phi \rfloor F_m + n F_{m+1}  +1) \phi \rfloor \\
 &= \lfloor (n+ \phi^2)/\phi \rfloor F_{m+1} + n F_{m+2}  + \lfloor  \phi - \lfloor (n+ \phi^2)/\phi \rfloor  (-\phi)^{-m} - n (-\phi)^{-m-1}  \rfloor .
\end{align*}
Suppose $\lfloor (n+ \phi^2)/\phi \rfloor = s$. Then $s \phi - \phi^2 < n < (s+1) \phi - \phi^2$ and
\begin{align*}
\mid   \lfloor (n+ \phi^2)/\phi \rfloor  (-\phi)^{-m} + n (-\phi)^{-m-1}  \mid &< s \phi^{-m} - (s \phi - \phi^2) \phi^{-m-1}  \\
 & = \phi^{-m+1} .
\end{align*}
If $m$ is even then
\begin{equation*}
0 \,\,<  \lfloor (n+ \phi^2)/\phi \rfloor  (-\phi)^{-m} + n (-\phi)^{-m-1} < \phi^{-m+1} \leq \phi^{-1}
\end{equation*}
and
$$
2 > \phi  >  \phi - \lfloor (n+ \phi^2)/\phi \rfloor  (-\phi)^{-m} - n (-\phi)^{-m-1}  \geq  \phi - \phi^{-1} = 1 .
$$

If $m$ is odd and $\geq 2$,  then
\begin{equation*}
0 \,\, < \,\, - \lfloor (n+ \phi^2)/\phi \rfloor  (-\phi)^{-m} - n (-\phi)^{-m-1}  \leq \phi^{-2} 
 \end{equation*}
and
$$
1 < \phi  <   \phi - \lfloor (n+ \phi^2)/\phi \rfloor  (-\phi)^{-m} - n (-\phi)^{-m-1}  < \phi + \phi^{-2} = 2 .
$$
The lemma follows since in both cases 
$$
 \lfloor  \phi - \lfloor (n+ \phi^2)/\phi \rfloor  (-\phi)^{-m} - n (-\phi)^{-m-1}  \rfloor = 1 .
$$
\end{proof}

\bigskip

\begin{lemma}
\label{sh3}
Let $x =  \lfloor n \phi \rfloor F_m + n F_{m-1} - F_{m+1}$  where $n \geq 1$. Then, if $m \geq 1$,
$$
{\tt sh_F} (x)  =  \lfloor n \phi \rfloor F_{m+1} + n F_{m} - F_{m+2}
$$
\end{lemma}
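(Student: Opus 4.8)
The plan is to follow exactly the template used in Lemmas \ref{sh1} and \ref{sh2}. First I would invoke the shift formula (\ref{shift}), writing ${\tt sh_F}(x) = \lfloor (x+1)\phi \rfloor - 1$, and then expand $(x+1)\phi$ using the identity (\ref{phif}) applied separately to the three Fibonacci terms: $F_m\phi = F_{m+1} - (-\phi)^{-m}$, $F_{m-1}\phi = F_m - (-\phi)^{-m+1}$ and $F_{m+1}\phi = F_{m+2} - (-\phi)^{-m-1}$. Substituting these and collecting the integer part splits $(x+1)\phi$ into the target integer $\lfloor n\phi \rfloor F_{m+1} + nF_m - F_{m+2}$ plus a remainder $D = \phi - \lfloor n\phi \rfloor(-\phi)^{-m} - n(-\phi)^{-m+1} + (-\phi)^{-m-1}$. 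Since the target is an integer, $\lfloor (x+1)\phi \rfloor = \lfloor n\phi \rfloor F_{m+1} + nF_m - F_{m+2} + \lfloor D \rfloor$, so the entire lemma reduces to showing $\lfloor D \rfloor = 1$.

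The key step is to control the fractional contribution. Writing $s = \lfloor n\phi \rfloor$ and factoring out $(-\phi)^{-m}$, the three error terms collapse into $T := \lfloor n\phi \rfloor(-\phi)^{-m} + n(-\phi)^{-m+1} - (-\phi)^{-m-1} = (-\phi)^{-m}\bigl(s - n\phi + \phi^{-1}\bigr)$, so that $D = \phi - T$. Because $n\phi$ is irrational for $n \geq 1$, we have $s - n\phi = -\{n\phi\} \in (-1,0)$, hence $s - n\phi + \phi^{-1} \in (-\phi^{-2}, \phi^{-1})$ and therefore $|s - n\phi + \phi^{-1}| < \phi^{-1}$. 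Consequently $|T| < \phi^{-m}\cdot\phi^{-1} = \phi^{-m-1} \leq \phi^{-2}$ for all $m \geq 1$.

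It remains to convert this into $\lfloor D \rfloor = 1$. Since $|T| < \phi^{-2} = 2 - \phi$, we obtain $2\phi - 2 < D < 2$, and as $2\phi - 2 = 2\phi^{-1} \approx 1.236 > 1$ this forces $\lfloor D \rfloor = 1$; substituting back yields ${\tt sh_F}(x) = \lfloor n\phi \rfloor F_{m+1} + nF_m - F_{m+2} + 1 - 1$, which is the claim. I do not expect a genuine obstruction here. The only points demanding care are the bookkeeping in applying (\ref{phif}) correctly to the three distinct indices $m-1$, $m$, $m+1$, and the \emph{strictness} of the irrationality bound, which is what guarantees $D < 2$ rather than merely $D \leq 2$. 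Notably, unlike in Lemma \ref{sh2}, the combined coefficient $s - n\phi + \phi^{-1}$ is small enough that the single absolute bound $|T| < \phi^{-m-1}$ already places $D$ strictly inside $(1,2)$ for every $m \geq 1$, so no separate analysis of even and odd $m$ is required.
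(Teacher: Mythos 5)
Your proof is correct and is precisely what the paper intends, since its proof of Lemma~\ref{sh3} is simply the instruction to repeat the template of Lemmas~\ref{sh1} and~\ref{sh2}: apply (\ref{shift}), expand via (\ref{phif}), and show the leftover term has floor equal to $1$. Your factoring $T = (-\phi)^{-m}\bigl(\lfloor n\phi\rfloor - n\phi + \phi^{-1}\bigr)$ with the uniform bound $|T| < \phi^{-m-1} \leq \phi^{-2} = 2-\phi$ even streamlines the template slightly by avoiding the even/odd case split used in Lemmas~\ref{sh1} and~\ref{sh2}, while reaching the same conclusion $\lfloor D\rfloor = 1$.
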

\begin{proof}
Use the same approach as for lemmas \ref{sh1} and \ref{sh2}.
\end{proof}

\bigskip

\begin{lemma}
\label{sh4}
Let $x =  (n+1) F_{m} + \lfloor n \phi \rfloor F_{m+1}$  where $n \geq 0$. Then, if $m \geq 2$,
$$
{\tt sh_F} (x)  =  (n+1) F_{m+1} + \lfloor n \phi \rfloor F_{m+2}.
$$
\end{lemma}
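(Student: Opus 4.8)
The plan is to follow the same template used in the proofs of Lemmas \ref{sh1} and \ref{sh2}. First I would invoke the shift formula (\ref{shift}), which gives ${\tt sh_F}(x) = \lfloor (x+1)\phi\rfloor - 1$, and then expand $(x+1)\phi$ using the two identities in (\ref{phif}), namely $F_m\phi = F_{m+1} - (-\phi)^{-m}$ and $F_{m+1}\phi = F_{m+2} - (-\phi)^{-m-1}$. Substituting $x = (n+1)F_m + \lfloor n\phi\rfloor F_{m+1}$ produces
\begin{align*}
(x+1)\phi = (n+1)F_{m+1} + \lfloor n\phi\rfloor F_{m+2} + \phi - (n+1)(-\phi)^{-m} - \lfloor n\phi\rfloor(-\phi)^{-m-1}.
\end{align*}
Since $(n+1)F_{m+1} + \lfloor n\phi\rfloor F_{m+2}$ is already an integer, the problem reduces to showing that the floor of the remaining term equals $1$; the claimed value of ${\tt sh_F}(x)$ then follows after subtracting the $1$ in the shift formula.

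Second, I would bound the error term $E := (n+1)(-\phi)^{-m} + \lfloor n\phi\rfloor(-\phi)^{-m-1}$. Writing $s = \lfloor n\phi\rfloor$ and factoring out the common sign $(-1)^m$ gives $E = (-1)^m\phi^{-m-1}\bigl((n+1)\phi - s\bigr)$. The convenient feature here is that $(n+1)\phi - s = (n\phi - s) + \phi$ equals the fractional part $\{n\phi\}$ plus $\phi$, which, because $0 \leq \{n\phi\} < 1$, lies in $[\phi, \phi^2)$. Hence $|E| = \phi^{-m-1}(\{n\phi\} + \phi)$ satisfies $\phi^{-m} \leq |E| < \phi^{-m+1}$, and the sign of $E$ is exactly that of $(-1)^m$.

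Finally I would split on the parity of $m$, using the exact identities $\phi - \phi^{-1} = 1$ and $\phi + \phi^{-2} = 2$. For even $m \geq 2$ we have $0 < E < \phi^{-m+1} \leq \phi^{-1}$, so $\phi - E \in (1, \phi) \subset (1,2)$; for odd $m \geq 3$ we have $0 < -E < \phi^{-m+1} \leq \phi^{-2}$, so $\phi - E \in (\phi, 2)$. In both cases the floor of $\phi - E$ is $1$, which is what is required. I expect no genuine obstacle: the argument is routine and structurally identical to Lemma \ref{sh1}. The only point demanding care is the sign and strictness bookkeeping, in particular the boundary case $m = 2$, where the upper bound $\phi^{-m+1} = \phi^{-1}$ is sharp and it is precisely the strict inequality $\{n\phi\} < 1$ (valid since $\phi$ is irrational) that keeps $\phi - E$ strictly above $1$.
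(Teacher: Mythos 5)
Your proposal is correct and is precisely what the paper intends: its proof of Lemma \ref{sh4} simply says to reuse the template of Lemmas \ref{sh1} and \ref{sh2}, and you carry out that template faithfully --- the shift formula (\ref{shift}), the expansion via (\ref{phif}), and the parity-split bound showing $\lfloor \phi - E \rfloor = 1$, using $\phi - \phi^{-1} = 1$ and $\phi + \phi^{-2} = 2$ exactly as in Lemma \ref{sh2}. Your factoring $E = (-1)^m \phi^{-m-1}\bigl(\{n\phi\} + \phi\bigr)$ is a slightly cleaner way to get the bound $\phi^{-m} \leq |E| < \phi^{-m+1}$ than the paper's term-by-term estimates, and your attention to the boundary case $m=2$, where strictness of $\{n\phi\} < 1$ is needed, is exactly right.
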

\begin{proof}
Use the same approach as for lemmas \ref{sh1} and \ref{sh2}.
\end{proof}

\bigskip

\section{Zeckendorf representations}
\label{zecksec}

We start with a theorem that was published by Kimberling in 1983. It characterises the integers which have a Zeckendorf representation not containing $F_2$.

\bigskip

\begin{theorem}[Kimberling \cite{Kimberling_1983}]
\label{kimb}
$\mathbb{N} \setminus A_2 = \{ \lfloor n \phi \rfloor - 1 : n \geq 2 \}$.
\end{theorem}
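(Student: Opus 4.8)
The plan is to recognise the left-hand side as a single orbit of the Fibonacci shift operator and then apply the closed form for ${\tt sh_F}$ from equation (\ref{shift}). Here $\mathbb{N}$ should be read as the positive integers, since the sets $A_k$ partition $\mathbb{Z}^+$. The set $A_2$ consists exactly of those integers whose least-significant-digit Zeckendorf representation begins with a $1$ (the digit $a_0$ coding $F_2$), so removing $A_2$ leaves precisely the positive integers whose representation begins with a $0$; in terms of the partition this is $\mathbb{N} \setminus A_2 = \bigcup_{k \geq 3} A_k$.

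The key step is to rewrite this union as an image under the shift. Since ${\tt sh_F}$ merely prepends a $0$ in lsd form, it distributes over the union, and by equation (\ref{shak}) we have ${\tt sh_F}(A_k) = A_{k+1}$ for every $k \geq 2$. Using the partition $\mathbb{N} = \bigcup_{k \geq 2} A_k$, I would then compute
$$
{\tt sh_F}(\mathbb{N}) = \bigcup_{k \geq 2} {\tt sh_F}(A_k) = \bigcup_{k \geq 2} A_{k+1} = \bigcup_{k \geq 3} A_k = \mathbb{N} \setminus A_2 .
$$
Thus the left-hand side is exactly the image of the positive integers under the shift operator.

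Next I would substitute the closed form. Equation (\ref{shift}) gives ${\tt sh_F}(m) = \lfloor (m+1)\phi \rfloor - 1$, so
$$
\mathbb{N} \setminus A_2 = \{\, {\tt sh_F}(m) : m \geq 1 \,\} = \{\, \lfloor (m+1)\phi \rfloor - 1 : m \geq 1 \,\},
$$
and re-indexing with $n = m+1$, so that $m \geq 1$ corresponds to $n \geq 2$, turns this into $\{\, \lfloor n\phi \rfloor - 1 : n \geq 2 \,\}$, which is precisely the right-hand side.

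I do not expect a serious obstacle: once the shift identity (\ref{shift}) and the action (\ref{shak}) are available, the argument is essentially bookkeeping. The only points needing care are the endpoint conventions — confirming that $\mathbb{N}$ denotes the positive integers so that $0$ (which the shift fixes) is excluded on both sides, and checking the first few values to pin down the lower limit as $n \geq 2$ rather than $n \geq 1$. As an independent cross-check the identity is also directly verifiable in {\tt Walnut}: one encodes non-membership in $A_2$ by a regular expression forcing a leading $0$, encodes the set $\{\lfloor n\phi\rfloor - 1 : n \geq 2\}$ using the synchronised automaton {\tt phinlsd} of Figure \ref{phinlsd}, and confirms that the two predicates are equivalent for all $n$.
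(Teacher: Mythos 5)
Your proposal is correct and takes essentially the same approach as the paper: the paper's one-line proof likewise identifies $\mathbb{N} \setminus A_2$ with the image of the shift operator (integers whose Zeckendorf representation has the form $0(n)_F$ for $n \geq 1$) and reads the result directly off the closed form ${\tt sh_F}(n) = \lfloor (n+1)\phi \rfloor - 1$ from (\ref{shift}). Your partition bookkeeping via $\bigcup_{k \geq 3} A_k$, the reindexing $n = m+1$, and the attention to whether $0 \in \mathbb{N}$ simply make explicit what the paper compresses into a single sentence.
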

\begin{proof}
Since $\mathbb{N} \setminus A_2$ consists of integers which have a Zeckendorf representation of the form $0 (n)_F$ for some integer $n \geq 1$, the result is a restatement of (\ref{shift}).\end{proof}

\bigskip

In 2014, Griffiths characterised the sets $A_k$.

\bigskip

\begin{theorem}[Griffiths \cite{Griffiths_2014} theorem 3.4]
For $k \geq 2$, 
$$
A_k =  \{  \lfloor (n+ \phi^2)/\phi \rfloor F_k + n F_{k+1} : n \geq 0 \}.
$$
\end{theorem}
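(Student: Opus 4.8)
The plan is to induct on $k$, using the shift action (\ref{shak}) together with Lemma \ref{sh2} for the inductive step, and to settle the base case $k=2$ by hand via the double-shift identity. For brevity write $g_k(n) = \lfloor (n+\phi^2)/\phi \rfloor F_k + n F_{k+1}$, so the assertion is $A_k = \{ g_k(n) : n \geq 0 \}$. The inductive step is essentially automatic: assuming the formula for some $k \geq 2$, equation (\ref{shak}) gives $A_{k+1} = {\tt sh_F}(A_k)$, so $A_{k+1}$ is obtained by applying ${\tt sh_F}$ to each $g_k(n)$. Lemma \ref{sh2}, used with $m = k \geq 2$, says exactly that ${\tt sh_F}(g_k(n)) = \lfloor (n+\phi^2)/\phi \rfloor F_{k+1} + n F_{k+2} = g_{k+1}(n)$, whence $A_{k+1} = \{ g_{k+1}(n) : n \geq 0 \}$. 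Indeed Lemma \ref{sh2} is tailored so that the shape of $g_k$ is preserved under $m \mapsto m+1$, so the entire induction is driven by that one lemma.

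The real content is the base case. Here I would first describe $A_2$ structurally: an integer lies in $A_2$ precisely when its Zeckendorf representation begins $1\,0\,\dots$, i.e.\ when it has the form $F_2 + {\tt sh_F^2}(w)$ for a unique $w \geq 0$. To see this, strip the leading $F_2$ from $x = [1\,0\,a_2 a_3 \dots]_F \in A_2$; the remaining block $[0\,0\,a_2 a_3 \dots]_F$ is the double shift ${\tt sh_F^2}([a_2 a_3 \dots]_F)$ of the integer $w = [a_2 a_3 \dots]_F$. As $w$ ranges over $\mathbb{N}$ this gives a bijection onto $A_2$, so $A_2 = \{ 1 + {\tt sh_F^2}(w) : w \geq 0 \}$. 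Substituting the second line of (\ref{shift}), namely ${\tt sh_F^2}(w) = \lfloor (w+1)\phi^2 \rfloor - 2$, yields $A_2 = \{ \lfloor (w+1)\phi^2 \rfloor - 1 : w \geq 0 \}$.

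It then remains only to check the elementary identity $g_2(n) = \lfloor (n+1)\phi^2 \rfloor - 1$ for every $n \geq 0$. With $F_2 = 1$ and $F_3 = 2$, the substitutions $\phi^2 = \phi + 1$ and $1/\phi = \phi - 1$ let one pull the integers $n$ and $2n$ out of the respective floors and reduce both sides to $\lfloor (n+1)\phi \rfloor + n$; this is a short computation with no obstacle. Combining the structural reduction with this identity gives $A_2 = \{ g_2(n) : n \geq 0 \}$, starting the induction.

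The only place demanding care is therefore the base case, specifically the claim that $w \mapsto 1 + {\tt sh_F^2}(w)$ is a bijection \emph{onto} $A_2$ (that it is exhaustive, not merely injective). Once $A_2$ is pinned down as $\{ \lfloor (w+1)\phi^2 \rfloor - 1 : w \geq 0 \}$, everything downstream is routine. As an alternative one could instead deduce the base case from the complementary Kimberling description of $\mathbb{N} \setminus A_2$ in Theorem \ref{kimb}, but the double-shift route above keeps the argument self-contained and aligned with the shift machinery used for the inductive step.
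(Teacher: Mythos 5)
Your proof is correct, and it shares the paper's skeleton — both reduce to the base case $k=2$ via the shift action (\ref{shak}) and Lemma \ref{sh2} — but your treatment of the base case is genuinely different. The paper disposes of $k=2$ mechanically: it builds an automaton for $A_2$ from the regular expression \texttt{1(0|1)*}, rewrites $\lfloor (n+\phi^2)/\phi \rfloor F_k + n F_{k+1}$ as $\lfloor (n+1)/\phi \rfloor F_k + F_k + n F_{k+1}$ using $\phi^2 = \phi+1$, and has {\tt Walnut} verify the resulting first-order proposition against the synchronised automaton {\tt noverphilsd}. You instead prove the base case by hand: the structural observation that $x \in A_2$ forces $a_0 = 1$, $a_1 = 0$ (the latter automatic from the no-adjacent-ones condition), so that stripping $F_2$ exhibits $x - 1$ as ${\tt sh_F^2}(w)$ for a unique $w \geq 0$, giving the bijection $A_2 = \{ 1 + {\tt sh_F^2}(w) : w \geq 0 \}$; then the second identity of (\ref{shift}) and the short floor computation $g_2(n) = \lfloor (n+1)\phi \rfloor + n = \lfloor (n+1)\phi^2 \rfloor - 1$ close the argument. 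I checked the details: the surjectivity you flag as the delicate point does follow from your stripping argument, and the floor identity is right ($\lfloor (n+\phi^2)/\phi \rfloor = \lfloor (n+1)\phi \rfloor - n$ via $1/\phi = \phi - 1$, while $\lfloor (n+1)\phi^2 \rfloor = \lfloor (n+1)\phi \rfloor + n + 1$). What each approach buys: the paper's machine check is uniform with its methodology and requires no case analysis, but rests on trusting the {\tt Walnut} computation and the correctness of the auxiliary automata; your version is self-contained, human-verifiable, and makes visible \emph{why} the formula holds at $k=2$ (it is exactly the double-shift identity in disguise), at the modest cost of the explicit representation-level argument. Your closing remark is also apt: Theorem \ref{kimb} would give an alternative complementary route, but your double-shift derivation is better aligned with the lemma driving the induction.
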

\begin{proof}
In view of (\ref{shak}) and lemma \ref{sh2}, we only need to establish the result for $k = 2$. This can be done with {\tt Walnut}. An automaton which accepts elements of the set $A_2$ can be constructed using a regular expression
\begin{verbatim}
reg a2 lsd_fib "1(0|1)*":
\end{verbatim}
Since $\phi^2 = \phi + 1$,  $\lfloor (n+ \phi^2)/\phi \rfloor F_k + n F_{k+1} =  \lfloor (n+ 1)/\phi \rfloor F_k + F_k + n F_{k+1}$. The result then follows from the following proposition which returns the value TRUE
\begin{verbatim}
eval test "?lsd_fib Ay $a2(y) <=> 
    (En,x $noverphilsd(n+1,x) & y=x+1+2*n)":
\end{verbatim}
\end{proof}

\bigskip

For $k \geq 0$,  we define the set $U_k$ by
$$
U_k = \bigcup_{i \geq k+2} A_i ,
$$
so $U_k$ consists of those integers having a Zeckendorf representation of the form $0^k (n)_F$ for some integer $n$. From (\ref{shak}), we have
\begin{equation}
\label{fbk}
{\tt sh_F} (U_k) = U_{k+1} .
\end{equation}

\bigskip

\begin{lemma}
\label{bklem}
For $k \geq 1$, the set $U_k$ satisfies
$$
U_k =  \{ \lfloor n \phi \rfloor F_k + n F_{k-1} - F_{k+1} : n \geq 1 \} .
$$
\end{lemma}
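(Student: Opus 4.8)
The plan is to prove the formula by induction on $k$, using the shift relation (\ref{fbk}) together with Lemma \ref{sh3} to pass from $k$ to $k+1$, so that the only genuine work is the base case $k=1$.

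First I would dispose of the base case. When $k=1$ we have $F_1 = 1$, $F_0 = 0$ and $F_2 = 1$, so the asserted formula reads $U_1 = \{ \lfloor n \phi \rfloor - 1 : n \geq 1 \}$. By definition $U_1$ consists exactly of those integers whose Zeckendorf representation has the form $0\,(m)_F$, that is, the integers obtained by a single shift, so $U_1 = \{ {\tt sh_F}(m) : m \geq 0 \}$. Applying the shift formula (\ref{shift}), which gives ${\tt sh_F}(m) = \lfloor (m+1)\phi \rfloor - 1$, and reindexing with $n = m+1$ (so that $n \geq 1$), I obtain precisely $U_1 = \{ \lfloor n\phi \rfloor - 1 : n \geq 1 \}$, as required.

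For the inductive step I would assume the formula for some $k \geq 1$ and use (\ref{fbk}) to write $U_{k+1} = {\tt sh_F}(U_k)$. Since the shift operator is a function, it can be applied termwise to the parametrisation of $U_k$, yielding
$$
U_{k+1} = \{ {\tt sh_F}\bigl( \lfloor n\phi\rfloor F_k + n F_{k-1} - F_{k+1} \bigr) : n \geq 1 \}.
$$
Now Lemma \ref{sh3}, applied with $m = k$ (permissible since $k \geq 1$ and $n \geq 1$), evaluates each shifted term to $\lfloor n\phi\rfloor F_{k+1} + n F_k - F_{k+2}$. Hence $U_{k+1} = \{ \lfloor n\phi\rfloor F_{k+1} + n F_k - F_{k+2} : n \geq 1 \}$, which is exactly the claimed formula with $k$ replaced by $k+1$, closing the induction.

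The difficult part has effectively been front-loaded into Lemma \ref{sh3}, whose proof supplies the Fibonacci identity that makes the parametrised form shift-invariant in the right way; once that lemma is in hand the induction is routine. The only point requiring a little care is the base-case bookkeeping: one must check that the range of the parameter $n$ (in particular whether $n=1$, which contributes the value $0$, is to be included) is consistent with the convention that $U_k$ is the image under a shift of the full set of representable integers, and that the termwise application of ${\tt sh_F}$ genuinely exhausts $U_{k+1}$ rather than only a subset. This last point follows because (\ref{fbk}) gives the set equality $U_{k+1} = {\tt sh_F}(U_k)$ and ${\tt sh_F}$ is injective on Zeckendorf representations, so the shifted parametrisation is a faithful reparametrisation of $U_{k+1}$.
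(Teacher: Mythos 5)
Your proof is correct and takes essentially the same route as the paper: reduce the general case via the shift relation (\ref{fbk}) together with Lemma \ref{sh3}, and settle the base case $k=1$ directly from (\ref{shift}). The only difference is that the paper also verifies $k=2$ separately with a {\tt Walnut} computation, which your argument makes redundant, since Lemma \ref{sh3} is stated for all $m \geq 1$ and so the inductive step already carries $k=1$ to $k=2$.
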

\begin{proof}
Given lemma \ref{sh3} and (\ref{fbk}), we only need to prove the result for $k=1, 2$. The result for $k=1$ follows from (\ref{shift}). For $k=2$, we define the set $U_2$ using a regular expression
\begin{verbatim}
reg u2 lsd_fib "00(0|1)*":
\end{verbatim}
Then the $k=2$ case follows from the following TRUE proposition:
\begin{verbatim}
eval test "?lsd_fib Ax (En,y $phinlsd(n,y) & x = y+n-2) <=> $u2(x)":
\end{verbatim}
\end{proof}

\bigskip

Using the above lemma, we can extend Kimberling's result in theorem \ref{kimb} to integers whose Zeckendorf representation does not contain $F_k$ for a given $k$.

\bigskip

\begin{theorem}
For $k \geq 2$, the set of integers whose Zeckendorf representation does not contain $F_k$ is given by
$$
\{ j +  \lfloor n \phi \rfloor F_{k-1} + n F_{k-2} - F_{k} : 0 \leq j < F_k,  \,\, n \geq 1 \} .
$$
\end{theorem}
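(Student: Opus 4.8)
The plan is to decompose the Zeckendorf representation at the position carrying the weight $F_k$ and to recognise the two resulting blocks as a ``low'' part and a ``high'' part to which Lemma \ref{bklem} applies. Writing $(N)_F = a_0 a_1 a_2 \dots$, the digit $a_{k-2}$ carries the weight $F_k$, so $N$ avoids $F_k$ precisely when $a_{k-2} = 0$. The point I would exploit is that, once $a_{k-2}=0$ is imposed, the Zeckendorf condition $a_i a_{i+1}=0$ no longer links the digits $a_0,\dots,a_{k-3}$ (weights $F_2,\dots,F_{k-1}$) to the digits $a_{k-1},a_k,\dots$ (weights $F_{k+1},F_{k+2},\dots$): the forced zero at position $k-2$ separates the two blocks, so any admissible low block may be combined with any admissible high block.

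For the low block, the integers whose Zeckendorf representation is supported on $F_2,\dots,F_{k-1}$ are exactly $\{0,1,\dots,F_k-1\}$: every integer below $F_k$ has such a representation, and conversely every such sum is at most $F_k-1$. These are the admissible values of $j$. For the high block, the integers whose Zeckendorf representation is supported on $F_{k+1},F_{k+2},\dots$ are exactly those of the form $0^{k-1}(m)_F$, i.e.\ the set $U_{k-1}$ together with $0$. Applying Lemma \ref{bklem} with $k$ replaced by $k-1$ gives
$$
U_{k-1} = \{ \lfloor n\phi\rfloor F_{k-1} + n F_{k-2} - F_{k} : n \geq 1 \},
$$
and the parameter value $n=1$ returns $\lfloor\phi\rfloor F_{k-1} + F_{k-2} - F_k = 0$, so this parametrisation already accounts for the empty high block. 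Hence the admissible high values are precisely $\lfloor n\phi\rfloor F_{k-1} + nF_{k-2} - F_k$ for $n\geq 1$.

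It then remains to argue that $N \mapsto (\text{low part},\text{high part})$ sets up a bijection between the integers avoiding $F_k$ and the pairs $(j,n)$ with $0\le j<F_k$ and $n\geq 1$, realising $N = j + (\lfloor n\phi\rfloor F_{k-1} + nF_{k-2} - F_k)$. In one direction, splitting the representation of an $F_k$-avoiding integer at the zero digit $a_{k-2}$ produces a low part $j\in[0,F_k)$ and a high part in $U_{k-1}\cup\{0\}$. In the other, given $j$ and $u\in U_{k-1}\cup\{0\}$, concatenating the representation of $j$, a single $0$ at position $k-2$, and the representation of $u$ yields an admissible Zeckendorf word, which by uniqueness of the Zeckendorf representation is the representation of $j+u$ and visibly avoids $F_k$.

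The step I expect to be the main obstacle is this last one: verifying cleanly that adding the low and high blocks never triggers a carry or creates a forbidden $11$ across the junction, so that the Zeckendorf representation of the sum really is the concatenation of the two blocks. This is exactly where the hypothesis $a_{k-2}=0$ does the work, since the forced zero at weight $F_k$ keeps any $1$ at $F_{k-1}$ separated from any $1$ at $F_{k+1}$, and it is the reason the two ranges may be combined freely. Once this is granted the displayed set equality follows immediately; alternatively, the whole statement can be checked in {\tt Walnut} by combining the synchronised automaton {\tt phinlsd} with a regular expression for $F_k$-avoidance, in the spirit of the proof of Lemma \ref{bklem}.
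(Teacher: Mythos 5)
Your proposal is correct and takes essentially the same route as the paper, which disposes of the theorem in one line by writing each $F_k$-avoiding integer as $j + x$ with $0 \leq j < F_k$ and $x \in U_{k-1}$ and invoking Lemma \ref{bklem}. The extra details you supply --- the decoupling of digit blocks across the forced zero at position $k-2$, the uniqueness argument showing concatenation realises $j+u$, and the observation that $n=1$ yields $0$ so the parametrisation covers the empty high block --- are exactly the facts the paper leaves implicit.
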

\begin{proof}
Integers whose Zeckendorf representation does not contain $F_k$ are of the form $j + x$ where $0 \leq j < F_k$ and $x \in U_{k-1}$. The result then follows from lemma \ref{bklem}.
\end{proof}

\bigskip
Dekking considered the set of integers whose Zeckendorf representation has a fixed prefix.\cite{https://doi.org/10.5281/zenodo.10803363} If $b = b_0 b_1 \dots b_{m-1}$ is a finite string of $0$'s and $1$'s containing no $11$, we define $R_b$ to be the set of integers with a Zeckendorf representation which starts $ b_0 b_1 \dots b_{m-1}$.

\bigskip

\begin{theorem}[Dekking \cite{https://doi.org/10.5281/zenodo.10803363} Theorem 2 and Proposition 3]
\label{dek}
Let $b = b_0 b_1 \dots b_{m-1}$ be a length m string of $0$'s and $1$'s containing no $11$. If $b_{m-1} = 0$, then $R_b$ is given by
$$
\{ F_m \lfloor n \phi \rfloor + F_{m-1} n + \gamma_b : n \geq 1 \} 
$$
where $\gamma_b = \sum_{i=0}^{m-2} b_i F_{i+2} - F_{m+1}$.

If $b_{m-1} = 1$, then $R_b$ is given by
$$
\{ F_{m+1} \lfloor n \phi \rfloor + F_{m} n + \gamma_b : n \geq 1 \} 
$$
where $\gamma_b = \sum_{i=0}^{m-1} b_i F_{i+2} - F_{m+2}$.
\end{theorem}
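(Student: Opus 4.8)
The plan is to reduce the statement to Lemma~\ref{bklem} by splitting each integer in $R_b$ into the fixed prefix contribution $\sum_{i=0}^{m-1} b_i F_{i+2}$ and a ``tail'' coming from the digits beyond position $m-1$. Concretely, an integer $N$ lies in $R_b$ precisely when its Zeckendorf representation reads $b_0 b_1 \dots b_{m-1} c_m c_{m+1} \dots$ for some valid continuation, so that
$$
N = \sum_{i=0}^{m-1} b_i F_{i+2} + T, \qquad T = \sum_{i \geq m} c_i F_{i+2}.
$$
The admissible tails $T$ are governed entirely by the no-$11$ condition at the junction with $b_{m-1}$, and this is exactly where the two cases of the theorem separate.

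First I would analyse the junction. If $b_{m-1} = 0$, the digit $c_m$ is unconstrained, so the tail $T$ ranges over $\{0\}$ together with all integers whose Zeckendorf representation has its first nonzero digit at position $\geq m$; since $A_i$ consists of integers whose representation has its first nonzero digit at position $i-2$, this set is exactly $\{0\} \cup U_m$. If instead $b_{m-1} = 1$, then $c_m = 0$ is forced, pushing the first nonzero digit of the tail to position $\geq m+1$, so $T$ ranges over $\{0\} \cup U_{m+1}$.

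Next I would feed these tail sets into Lemma~\ref{bklem}. Taking $k = m$ (respectively $k = m+1$) gives $U_k = \{\lfloor n\phi\rfloor F_k + n F_{k-1} - F_{k+1} : n \geq 1\}$, and one checks that $n=1$ returns $T = 0$, since $F_k + F_{k-1} - F_{k+1} = 0$, so the parametrisation with $n \geq 1$ already accounts for the zero tail. Substituting this expression for $T$, together with the identity $\sum_{i=0}^{m-1} b_i F_{i+2} = \gamma_b + F_{k+1}$ that is built into the definition of $\gamma_b$ in each case, the term $-F_{k+1}$ coming from the tail cancels against the $+F_{k+1}$ coming from the prefix, leaving
$$
N = F_k \lfloor n\phi\rfloor + F_{k-1}\, n + \gamma_b, \qquad n \geq 1.
$$
With $k = m$ this is the $b_{m-1}=0$ formula, and with $k = m+1$ it is the $b_{m-1}=1$ formula.

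The two Fibonacci cancellations are immediate routine calculations. The only point requiring care — and the step I would treat as the main obstacle — is the bookkeeping at the junction: verifying that the admissible continuations are exactly $\{0\}\cup U_m$ or $\{0\}\cup U_{m+1}$, and confirming that the $n=1$ instance of Lemma~\ref{bklem} supplies precisely the zero tail, so that the parameter range $n \geq 1$ is neither short by the all-zero continuation nor double-counting it.
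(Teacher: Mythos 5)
Your proposal is correct and takes essentially the same approach as the paper: write each element of $R_b$ as the fixed prefix $\sum_{i=0}^{m-1} b_i F_{i+2}$ plus a tail lying in $U_m$ (when $b_{m-1}=0$) or $U_{m+1}$ (when $b_{m-1}=1$), then invoke Lemma~\ref{bklem}. Your explicit bookkeeping at the junction and your check that the $n=1$ instance of Lemma~\ref{bklem} supplies exactly the zero tail only spell out details the paper leaves implicit.
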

\begin{proof}
We start by noting that it is sufficient to prove the $b_{m-1} = 0$ case. The $b_{m-1} = 1$ case follows by considering the set of integers with Zeckendorf representation starting $b_0 b_1, \dots b_{m-2} 1 0$.

If the length of the word $b$ is $m$, then, since a Zeckendorf representation cannot contain two contiguous $1$'s, we have 
\begin{equation*}
R_b = 
\begin{cases}
\{  \sum_{i=0}^{m-1} b_i F_{i+2}  + x: x \in U_m \} & \text{   if   }  \,\, b_{m-1} = 0 \\
\{  \sum_{i=0}^{m-1} b_i F_{i+2}  + x: x \in U_{m+1} \} & \text{   if   }  \,\, b_{m-1} = 1 .
\end{cases}
\end{equation*}
The result then follows from lemma \ref{bklem}.
\end{proof}

\bigskip

\begin{remark}
The form of $\gamma_b$ in theorem \ref{dek} differs from that which appears in Dekking's theorem. Let $b = b_0 b_1 \dots b_{m-1}$ be a length m string of $0$'s and $1$'s containing no $11$.  Define $T_{00} := \{0 < k < m : b_{k-1} b_k = 0 0 \}$. Then Dekking showed that $\gamma_b = -1 -  \sum_{k \in T_{00}} F_k$. The two expressions are equal as we show below.
\end{remark}

\bigskip

\begin{proposition}
Let $b = b_0 b_1 \dots b_{m-1}$ be a length m string of $0$'s and $1$'s containing no $11$.  Let $T_{00} := \{0 < k < m : b_{k-1} b_k = 0 0 \}$. Then
\begin{equation*}
1 + \sum_{k \in T_{00}} F_k = 
\begin{cases}
-\sum_{i=0}^{m-2} b_i F_{i+2} + F_{m+1} & \text{   if   }  \,\, b_{m-1} = 0 \\
-\sum_{i=0}^{m-1} b_i F_{i+2} + F_{m+2} & \text{   if   }  \,\, b_{m-1} = 1 .
\end{cases}
\end{equation*}
\end{proposition}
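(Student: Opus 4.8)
The plan is to collapse the two cases of the proposition into a single identity and then prove that identity by induction on the length $m$. When $b_{m-1}=0$ we have $\sum_{i=0}^{m-2} b_i F_{i+2} = \sum_{i=0}^{m-1} b_i F_{i+2}$, so after moving the Zeckendorf sum to the left both displayed cases become the uniform statement
$$
\Phi(b) \;:=\; \sum_{i=0}^{m-1} b_i F_{i+2} \;+\; \sum_{k \in T_{00}} F_k \;+\; 1 \;=\; F_{m+1+b_{m-1}},
$$
where $F_{m+1+b_{m-1}}$ denotes $F_{m+1}$ when $b_{m-1}=0$ and $F_{m+2}$ when $b_{m-1}=1$. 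Thus it is enough to prove this single identity for every length-$m$ binary word $b$ containing no $11$.

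I would argue by induction on $m$. For the base case $m=1$ the set $T_{00}$ is empty, so $\Phi(b)=b_0 F_2 + 1 = b_0+1$, which equals $F_2=1$ when $b_0=0$ and $F_3=2$ when $b_0=1$, matching $F_{2+b_0}$. For the inductive step, write $b' = b_0 b_1 \dots b_{m-2}$ for the length-$(m-1)$ prefix, which again contains no $11$. Comparing the defining sums, the index set $T_{00}(b)$ differs from $T_{00}(b')$ only by the possible extra index $k=m-1$, which lies in $T_{00}(b)$ precisely when $b_{m-2}=b_{m-1}=0$ and then contributes $F_{m-1}$, while the Zeckendorf sum gains exactly the term $b_{m-1}F_{m+1}$. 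Hence
$$
\Phi(b) \;=\; \Phi(b') \;+\; b_{m-1} F_{m+1} \;+\; [\,b_{m-2}=b_{m-1}=0\,]\,F_{m-1}.
$$
By the induction hypothesis applied to $b'$ we have $\Phi(b') = F_{m+b_{m-2}}$, and it remains to check the three admissible combinations of $(b_{m-2},b_{m-1})$, namely $(0,0)$, $(0,1)$ and $(1,0)$, the combination $(1,1)$ being excluded. In each case the identity $F_m + F_{m-1} = F_{m+1}$ reduces $\Phi(b)$ to $F_{m+1+b_{m-1}}$, completing the induction.

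There is no serious obstacle here: once the reformulation is in place the argument is a routine induction driven by the Fibonacci recurrence. The only point demanding care is the bookkeeping for $T_{00}$ under the passage from $b'$ to $b$, in particular confirming that this introduces at most the single new index $k=m-1$ and that the term it contributes is $F_{m-1}$ rather than $F_m$ or $F_{m+1}$. Verifying the reformulation itself, especially the harmless replacement of the upper summation limit $m-2$ by $m-1$ when $b_{m-1}=0$, is the other place where a sign or index slip could creep in.
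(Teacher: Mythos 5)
Your proof is correct and follows essentially the same route as the paper: both reformulate the two cases as a single identity for $\sum_i b_i F_{i+2} + \sum_{k\in T_{00}} F_k + 1$ and induct on the word length, with the same three-case analysis on the admissible final digit pairs $(0,0)$, $(0,1)$, $(1,0)$ resolved by the Fibonacci recurrence. The only cosmetic difference is direction: the paper extends $b$ by appending a digit $b_m$, while you strip the last digit to pass to the prefix $b'$, which is the same induction read backwards.
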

\begin{proof}
We will work by induction on the length of the word $b$. The proposition can be verified for small values of $m$ by explicit calculation. Define the integers $X_b$ and $Y_b$ by
$$
X_b := \sum_{k \in T_{b}} F_k \,\, \text{  and   } \,\, Y_b := \sum_{i=0}^{m-1} b_i F_{i+2} .
$$
Then we need to show that 
\begin{equation*}
X_b + Y_b = 
\begin{cases}
F_{m+1} - 1 & \text{   if   }  \,\, b_{m-1} = 0 \\
F_{m+2} - 1 & \text{   if   }  \,\, b_{m-1} = 1 .
\end{cases}
\end{equation*}
Assume the proposition holds for words of length $m$. Let $b = b_0 b_1 \dots b_{m-1}$ and $\bar{b} = b_0 b_1 \dots b_{m-1} b_m$. There are three cases to consider.

\bigskip
\begin{case}[ $b_{m-1} = 1, b_m = 0$]
In this case, $X_{\bar{b}} = X_b$, $Y_{\bar{b}} = Y_b$ and 
$$
X_{\bar{b}} + Y_{\bar{b}}  = X_b + Y_b = F_{m+2} - 1
$$ 
in agreement with the proposition when $ b_m = 0$.
\end{case}
\bigskip
\begin{case}[ $b_{m-1} = 0, b_m = 0$]
In this case, $X_{\bar{b}} = X_b + F_m$, $Y_{\bar{b}} = Y_b$ and $X_b + Y_b = F_{m+1} - 1$. So,  
$$
X_{\bar{b}} + Y_{\bar{b}}  = X_b + Y_b + F_m = F_{m+1} - 1 + F_m = F_{m+2} - 1
$$
in agreement with the proposition when $ b_m = 0$.
\end{case}
\bigskip
\begin{case}[ $b_{m-1} = 0, b_m = 1$]
In this case, $X_{\bar{b}} = X_b$, $Y_{\bar{b}} = Y_b + F_{m+2}$ and $X_b + Y_b = F_{m+1} - 1$. So,  
$$
X_{\bar{b}} + Y_{\bar{b}}  = X_b + Y_b + F_{m+2} = F_{m+1} - 1 + F_{m+2} = F_{m+3} - 1
$$
in agreement with the proposition when $ b_m = 1$.
\end{case}
\end{proof}

\bigskip

\section{Chung-Graham representations}
The results for Zeckendorf representations have recently been extended to Chung-Graham representations by Chu, Kanji and Vasseur\cite{Chu:2025aa} and Bustos et al.\cite{Bustos:2025aa} We will re-prove some of their results in this section. Our method involves converting the problem from the Chung-Graham representation to the Zeckendorf representation using lemma \ref{fibcgsh} and the automaton shown in figure \ref{fibcg}. We can then use the methods from section \ref{zecksec}. For the Chung -Graham representation, we define the sets $B_{2k}$ consisting of those integers $n$ which have a Chung-Graham representation  $n = \sum_{i = 1}^m a_{i} F_{2 k_i}$ with $k = k_1 < k_2 < \dots < k_m$ and each $a_{i} \in \{1, 2 \}$.  These are similar to the sets $A_k$ that were defined for the Zeckendorf representation. The sets $B_{2k}$ give a partition of the positive integers. The Chung-Graham shift operator ${\tt sh_{CG}}$ acts on the sets  $B_{2k}$ by
\begin{equation}
\label{shakcg}
{\tt sh_{CG}^2} (B_{2k}) = B_{2k+2} \,\, \text{  for  } \,\, k \geq 1.
\end{equation}

Since the least significant non-zero digit in a Chung-Graham representation may be either $1$ or $2$, $B_{2k}$ can be partitioned into two subsets, $B_{2k}^{(1)}$ and $B_{2k}^{(2)}$ in an obvious way. The action of the double shift operator on the sets $B_{2k}^{(1)}$ and $B_{2k}^{(2)}$ is similar to (\ref{shakcg}).

Distinct elements of $A_{2k}$ and $B_{2k}^{(1)}$ cannot get too close together.

\bigskip

\begin{lemma}
\label{diff}
If $x \in A_{2k}$ and $y \in B_{2k}^{(1)}$, then $x = y$ or $\mid x - y \mid \, \geq \, F_{2k}$.
\end{lemma}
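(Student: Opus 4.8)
The plan is to exploit the common structure of the two sets and then strip the parameter $k$ using the double shift operators, leaving a finite computation for {\tt Walnut}. An element $x \in A_{2k}$ has least-significant-digit Zeckendorf representation $0^{2k-2}1\,0\,\dots$, so $x = F_{2k} + t$ where the tail $t$ is either $0$ or an integer whose Zeckendorf representation uses only terms $\geq F_{2k+2}$, i.e.\ $t \in \{0\} \cup U_{2k}$. Likewise every $y \in B_{2k}^{(1)}$ has least-significant non-zero Chung-Graham digit equal to $1$ at the position of $F_{2k}$, so $y = F_{2k} + s$ with $s$ either $0$ or representable in Chung-Graham form using only terms $\geq F_{2k+2}$. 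Since elements of both tails are either $0$ or $\geq F_{2k+2} > F_{2k}$, the cases where one of $t,s$ vanishes are immediate, and the statement reduces to showing that a nonzero Zeckendorf tail $t$ and a nonzero Chung-Graham tail $s$ satisfy $t = s$ or $\lvert t - s\rvert \geq F_{2k}$. I would first record that the base case $k=1$ is vacuous, because $F_2 = 1$ and the two integers $x,y$ are automatically either equal or at least $1$ apart.

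Next I would remove the dependence on $k$ with the shift operators. By (\ref{shak}) the map ${\tt sh_F^2}$ carries $A_{2k}$ bijectively onto $A_{2k+2}$, and by (\ref{shakcg}) the map ${\tt sh_{CG}^2}$ carries $B_{2k}^{(1)}$ onto $B_{2k+2}^{(1)}$; moreover Lemma \ref{fibcgsh} pins down the relation between the two shifts, namely ${\tt sh_{CG}^2}(y) = {\tt sh_F^2}(y)$ when $y \in F_{even}$ and ${\tt sh_{CG}^2}(y) = {\tt sh_F^2}(y) + 1$ when $y \in F_{odd}$. With this I would argue that equality is preserved exactly between consecutive levels: if $x = y$ then $x \in A_{2k} \subseteq F_{even}$ forces the correction term to vanish, so ${\tt sh_F^2}(x) = {\tt sh_{CG}^2}(y)$; conversely the images can coincide only in this way, since ${\tt sh_F^2}$ is strictly increasing with all gaps at least $2$ (as $\phi^2 \approx 2.618$), so the spurious $+1$ correction arising from $y \in F_{odd}$ can never make two distinct shifted values agree. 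Hence it suffices to propagate the inequality $x \neq y \Rightarrow \lvert x - y\rvert \geq F_{2k}$ from one level to the next and to check the first nontrivial level $k=2$ directly in {\tt Walnut}.

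The hard part will be the inductive step for the inequality, and it is genuinely delicate because the bound is attained: for instance $16 = F_4 + F_7 \in A_4$ and $19 = F_4 + 2F_6 \in B_4^{(1)}$ give $\lvert 16 - 19\rvert = 3 = F_4$, so there is no slack to spend. A naive scaling estimate fails precisely here, since $F_{2k}\phi^2 = F_{2k+2} - \phi^{-2k}$ lies strictly below $F_{2k+2}$, and controlling ${\tt sh_F^2}$ and ${\tt sh_{CG}^2}$ only through the floor bound $\lfloor(n+1)\phi^2\rfloor - 2$ loses a bounded additive error, yielding merely $\lvert x' - y'\rvert \geq F_{2k+2} - 2$. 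The resolution is to avoid estimation entirely and work with the representations exactly: convert the Chung-Graham condition into the Zeckendorf world using the automaton {\tt fibcg} of Figure \ref{fibcg} together with Lemma \ref{fibcgsh}, express membership in $A_{2k}$ and $B_{2k}^{(1)}$ and the difference $x - y$ through the shift relations ({\tt fibsh2}, {\tt cgsh2}), and discharge the finitely many boundary configurations as a decidable first-order statement in {\tt Walnut}. The essential obstacle throughout is this tightness, which forces the argument to track the exact floor and $F_{even}/F_{odd}$ parity behaviour rather than rely on inequalities with room to spare.
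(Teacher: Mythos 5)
There is a genuine gap, and you have in fact pointed at it yourself without closing it: the inductive step for the inequality is never established. Your shift-based estimates (as you concede) only yield $\lvert x' - y' \rvert \geq F_{2k+2} - 2$, and your proposed rescue --- ``discharge the finitely many boundary configurations as a decidable first-order statement in {\tt Walnut}'' --- does not describe a finite computation. The boundary configurations where the bound is nearly attained occur at every level $k$, so they are parametrised by an unbounded integer; a single {\tt Walnut} query can only quantify over them if the parameter $F_{2k}$ is itself made available synchronously inside the automaton. But once you do that, the entire shift/induction scaffolding becomes superfluous, because the full statement for all $k$ simultaneously is then directly decidable. That is exactly what the paper does: it never inducts on $k$ at all. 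It builds regular expressions {\tt a2k} and {\tt b2k} accepting, uniformly in $k$, the elements of $A_{2k}$ and $B_{2k}^{(1)}$ (the position of the first nonzero digit encodes $k$), a product automaton {\tt samek} forcing the two inputs to share the same $k$, and an automaton {\tt mk} whose second track extracts $w = F_{2k}$ as the integer with Zeckendorf representation $0^{2k-2}1$. Two universally quantified propositions (one for $y > x$, one for $x > y$), with {\tt fibcg} converting the Chung--Graham input to an integer, then verify $\lvert x - y \rvert \geq w$ in one shot. Your plan uses the shift machinery of Lemmas \ref{sh1}--\ref{sh4} and \ref{fibcgsh}, which the paper reserves for the \emph{formula-type} theorems (where the object to be proved genuinely transports level-to-level under the shift); for this lemma the tightness you correctly identified (e.g.\ your example $\lvert 16 - 19 \rvert = F_4$) is precisely why the transport argument cannot work with any bounded additive slack.

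The parts of your write-up that are sound: the base case $k=1$ is indeed vacuous since $F_2 = 1$; the observation that equality propagates under the double shifts is correct (the correction term of Lemma \ref{fibcgsh} cannot create a collision because gaps of ${\tt sh_F^2}$ are at least $2$); and your diagnosis that $F_{2k}\phi^2 = F_{2k+2} - \phi^{-2k}$ makes naive scaling lose exactly the needed margin is accurate. But a correct diagnosis of why the natural inductive step fails is not a proof, and the missing idea --- encoding $F_{2k}$ as a synchronised quantity so that the whole $k$-parametrised family collapses to one decidable sentence --- is the entire content of the paper's argument.
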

\begin{proof}
We use {\tt Walnut} for this. We first create automatons {\tt a2k} and {\tt b2k} which accept integers in $A_{2k}$ and $B_{2k}^{(1)}$ respectively.
\begin{verbatim}
reg a2k lsd_fib "(00)*1(0|1)*":
reg b2k lsd_cg "(00)*1(0|1|2)*":
\end{verbatim}

Next we create an automaton to ensure that the same value of $k$ appears in $A_{2k}$ and $B_{2k}^{(1)}$.
\begin{verbatim}
reg samek lsd_fib lsd_cg "[0,0]*[1,1]
    ([0,0]|[0,1]|[0,2]|[1,0]|[1,1]|[1,2])*":
\end{verbatim}

The automaton {\tt mk} measures the value of $k$ appearing in elements of $A_{2k}$:
\begin{verbatim}
reg mk lsd_fib lsd_fib "[0,0]*[1,1]([0,0]|[0,1])*":
\end{verbatim}

We now put together a proposition which selects integers $x \in A_{2k}$ and $y \in B_{2k}^{(1)}$ for the same $k$. It measures the value of $k$ using {\tt mk}, which produces $w = F_{2k}$. It then converts $y$ into an integer $z$ in Zeckendorf form so that it can compare $x$ and $y$ and show that $y-x \geq F_{2k}$ when $y > x$.
\begin{verbatim}
eval test "?lsd_fib Aw,x,y,z ($a2k(x) & $b2k(?lsd_cg y) & 
    $samek(x, ?lsd_cg y) & $mk(w,x) & $fibcg(z, ?lsd_cg y) & x<z) 
    => z>=w+x":
\end{verbatim}

The second proposition does the same, except that it treats the case $x > y$.
\begin{verbatim}
eval test "?lsd_fib Aw,x,y,z ($a2k(x) & $b2k(?lsd_cg y) & 
    $samek(x, ?lsd_cg y) & $mk(w,x) & $fibcg(z, ?lsd_cg y)  & x>z) 
    => x>z+w":
\end{verbatim}
Since both propositions are TRUE, the lemma follows.
\end{proof}

\bigskip

We introduce sequence A276885 from the OEIS\cite{oeis}, which we denote by {\tt sc}. It is described as the sums-complement of the sequence $\{ \lfloor n \phi^2 \rfloor : n \geq 1 \}$ and satisfies the formula ${\tt sc} (n) = 2 \lfloor n/\phi \rfloor + n + 1$ for $n \geq 0$.\cite{Dekking:2017aa}

\bigskip
\begin{theorem}
\label{feunion}
The set $F_{even}$ is the disjoint union of $\mathbb{N} \setminus B_{2}$ and sequence A276885 from the OEIS.
\end{theorem}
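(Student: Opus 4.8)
The plan is to push everything into the Zeckendorf world, where the shift identity (\ref{shak}) together with lemma \ref{fibcgsh} does almost all of the work. First note that $\mathbb{N}\setminus B_2$ is exactly the set of integers whose Chung--Graham representation carries a $0$ in the $F_2$ slot, i.e.\ the image ${\tt sh_{CG}^2}(\mathbb{N})$ of the double Chung--Graham shift. Reading lemma \ref{fibcgsh} termwise rewrites this image in Zeckendorf terms,
\[
\mathbb{N}\setminus B_2 \;=\; \{\,{\tt sh_F^2}(m):m\in F_{even}\,\}\ \sqcup\ \{\,{\tt sh_F^2}(m)+1:m\in F_{odd}\,\}\ \sqcup\ \{0\}.
\]
Here $0={\tt sh_{CG}^2}(0)$ is the only non-positive element, so the statement is read with $\mathbb{N}$ the positive integers, consistent with the $A_k$ partitioning the positive integers.

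Next I would split $F_{even}$ by its smallest index. Since ${\tt sh_F^2}(A_{2k})=A_{2k+2}$ by (\ref{shak}), we get ${\tt sh_F^2}(F_{even})=\bigcup_{k\ge2}A_{2k}$ and hence $F_{even}=A_2\sqcup{\tt sh_F^2}(F_{even})$. The two inclusions placing $(\mathbb{N}\setminus B_2)\setminus\{0\}$ inside $F_{even}$ are then immediate: the first block above is literally $\bigcup_{k\ge2}A_{2k}$, while for $m\in F_{odd}$ the Zeckendorf representation of ${\tt sh_F^2}(m)$ has smallest index odd and at least $5$, so adding $1$ merely prepends $F_2$ without creating a $11$ and lands the element in $A_2$. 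In parallel, every element of $A_2$ has the form $F_2+x$ with $x\in{\tt sh_F^2}(\mathbb{N})$, giving
\[
A_2=\{\,{\tt sh_F^2}(m)+1:m\ge0\,\}=\{1\}\sqcup\{\,{\tt sh_F^2}(m)+1:m\in F_{even}\,\}\sqcup\{\,{\tt sh_F^2}(m)+1:m\in F_{odd}\,\},
\]
the three blocks coming from $m=0$, $m\in F_{even}$ and $m\in F_{odd}$.

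Comparing the displays collapses the theorem to a single identity. The block ${\tt sh_F^2}(F_{even})$ and the carry block $\{{\tt sh_F^2}(m)+1:m\in F_{odd}\}$ together exhaust $(\mathbb{N}\setminus B_2)\setminus\{0\}$, and this carry block is exactly the part of $A_2$ lying in $\mathbb{N}\setminus B_2$; what is left over is
\[
F_{even}\setminus(\mathbb{N}\setminus B_2)\;=\;A_2\setminus\{\,{\tt sh_F^2}(m)+1:m\in F_{odd}\,\}\;=\;\{1\}\cup\{\,{\tt sh_F^2}(m)+1:m\in F_{even}\,\}.
\]
Disjointness of the two parts comes for free, since ${\tt sh_F^2}$ is injective and $F_{even}\cap F_{odd}=\emptyset$. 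Thus the theorem is equivalent to the assertion that this leftover set equals A276885.

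The hard part is precisely this last identification: it is the only point at which the shift combinatorics must be matched against the OEIS sequence, and it is where I would hand off to {\tt Walnut}. The leftover set is Fibonacci-automatic, being $1$ together with the image $\{{\tt sh_F^2}(m)+1:m\in F_{even}\}$, both of which are expressible from {\tt fibeven} and {\tt fibsh2}; A276885 is Fibonacci-synchronised through the closed form for ${\tt sc}(n)$ recorded above, built from the relevant synchronised Beatty automaton of figures \ref{phinlsd}--\ref{noverphilsd}. One {\tt eval} comparing the two predicates, together with a second that converts the Chung--Graham side via {\tt fibcg} and verifies that $\mathbb{N}\setminus B_2$ and A276885 are disjoint, then completes the proof.
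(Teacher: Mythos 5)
Your proposal is correct, but it takes a genuinely different route from the paper. The paper's own proof of Theorem \ref{feunion} contains no structural argument at all: it defines sequence A276885 in {\tt Walnut} via the synchronised automaton for $\lfloor n\phi\rfloor$, defines $\mathbb{N}\setminus B_2$ via the regular expression {\tt cg0} together with the converter {\tt fibcg}, and then certifies the theorem wholesale with two {\tt eval} statements --- one FALSE proposition for disjointness and one TRUE proposition asserting that for all $x>0$, $x\in F_{even}$ iff $x$ lies in one of the two sets. You instead do the combinatorics by hand: identifying $\mathbb{N}\setminus B_2$ with ${\tt sh_{CG}^2}(\mathbb{N}_{\geq 0})$, splitting it through Lemma \ref{fibcgsh} into ${\tt sh_F^2}(F_{even})$, the carry block $\{{\tt sh_F^2}(m)+1 : m\in F_{odd}\}$, and $\{0\}$, and playing this against $F_{even}=A_2\sqcup{\tt sh_F^2}(F_{even})$ and $A_2=\{{\tt sh_F^2}(m)+1 : m\geq 0\}$. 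Each step checks out --- in particular the key observation that for $m\in F_{odd}$ the least Zeckendorf index of ${\tt sh_F^2}(m)$ is odd and at least $5$, so adding $1$ legitimately prepends $F_2$, and your handling of $0$ by reading $\mathbb{N}$ as the positive integers matches the paper's restriction to $x>0$. This reduces the theorem to the single automatic identity $\{1\}\cup\{{\tt sh_F^2}(m)+1 : m\in F_{even}\}=\text{A276885}$, which is indeed true (e.g.\ $m=1,3,4,6$ give $4,9,12,17$) and {\tt Walnut}-checkable as you describe; note, incidentally, that given your reduction the second {\tt eval} you propose (disjointness of $\mathbb{N}\setminus B_2$ and A276885) is redundant, since your leftover set is constructed as $F_{even}\setminus(\mathbb{N}\setminus B_2)$. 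What each approach buys: the paper's is shorter and leaves nothing to hand-verify, treating the statement as one decidable assertion about Fibonacci-automatic sets; yours is more in the spirit of the paper's other proofs (shift-operator reduction plus a small {\tt Walnut} base case), shrinks the computational dependency to one identity, and actually \emph{explains} the theorem --- A276885 is exhibited as $\{{\tt sh_F^2}(m)+1 : m\in F_{even}\cup\{0\}\}$, i.e.\ precisely the trace of the $+1$ carry discrepancy in Lemma \ref{fibcgsh}, which the paper's black-box verification does not reveal.
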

\begin{proof}
Sequence A276885 is defined in {\tt Walnut} as
\begin{verbatim}
def sc "?lsd_fib En,y $phinlsd(n,y) & x = 2*y+n+1":
\end{verbatim}

The set $\mathbb{N} \setminus B_{2}$ is defined by:
\begin{verbatim}
reg cg0 lsd_cg "0(0|1|2)*":
\end{verbatim}

The disjointness of $\mathbb{N} \setminus B_{2}$ and sequence A276885 follows from the proposition below, which is FALSE. Remember that the automaton, {\tt fibcg} converts between the Zeckendorf and Chung-Graham representations.
\begin{verbatim}
eval test "?lsd_fib Ex,y $sc(x) & 
    $fibcg(x, ?lsd_cg y) & $cg0(?lsd_cg y)":
\end{verbatim}
The theorem is proved by the following TRUE proposition
\begin{verbatim}
eval test "?lsd_fib Ax (x>0 & $fibeven(x)) <=> 
  (x>0 & ($sc(x)|(Ey $fibcg(x, ?lsd_cg y) & $cg0(?lsd_cg y))))":
\end{verbatim}
\end{proof}

\bigskip
\begin{theorem}
$$
\mathbb{N} \setminus B_{2} \,\, \subseteq \,\,  \{ \lfloor n/\phi \rfloor + n : n \geq 1 \}  .
$$
\end{theorem}
\begin{proof}
The inclusion follows from the {\tt Walnut} proposition
\begin{verbatim}
eval test "?lsd_fib Ax (Ey $fibcg(x, ?lsd_cg y) & $cg0(?lsd_cg y)) 
    => (Em,n $noverphilsd(m,n) & (x=n+m))":
\end{verbatim}
\end{proof}

\bigskip

\begin{theorem}[Chu, Kanji and Vasseur\cite{Chu:2025aa}  Proposition 4.3]
\label{ckv43}
For $k \geq 1$
 $$
 B_{2k} = \{ (n+1) F_{2k} + \lfloor n/\phi \rfloor F_{2k-1} : n \geq 0 \}.
$$
\end{theorem}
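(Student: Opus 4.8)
The plan is to reduce the statement, for all $k$, to two explicit base cases by exploiting the shift operators, in the same spirit as the proofs of Griffiths' theorem and Lemma \ref{bklem}. The starting observation is that the claimed expression $(n+1)F_{2k} + \lfloor n/\phi\rfloor F_{2k-1}$ is exactly the quantity $x$ in Lemma \ref{sh1} with $m = 2k$. Hence, as long as $2k > 3$, that is $k \geq 2$, two successive applications of Lemma \ref{sh1} (first with $m = 2k$, then with $m = 2k+1$) yield
\[
{\tt sh_F^2}\bigl((n+1)F_{2k} + \lfloor n/\phi\rfloor F_{2k-1}\bigr) = (n+1)F_{2k+2} + \lfloor n/\phi\rfloor F_{2k+1},
\]
so the double Zeckendorf shift carries the $k$-th formula to the $(k+1)$-st, index $n$ by index $n$.

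The difficulty is that the sets $B_{2k}$ are governed by the Chung-Graham shift ${\tt sh_{CG}^2}$ through (\ref{shakcg}), whereas the formula transforms under ${\tt sh_F^2}$, and Lemma \ref{fibcgsh} says these two operators agree only on $F_{even}$ and differ by $1$ on $F_{odd}$. To close this gap I would first note that $B_{2k} \subseteq F_{even}$ for every $k \geq 2$: since the $B_{2k}$ partition the positive integers, the union $\bigcup_{k \geq 2} B_{2k}$ is precisely the positive part of $\mathbb{N}\setminus B_2$, which Theorem \ref{feunion} places inside $F_{even}$. Therefore ${\tt sh_{CG}^2}$ and ${\tt sh_F^2}$ coincide on each $B_{2k}$ with $k \geq 2$, and assuming $B_{2k} = \{(n+1)F_{2k} + \lfloor n/\phi\rfloor F_{2k-1} : n \geq 0\}$ we obtain
\[
B_{2k+2} = {\tt sh_{CG}^2}(B_{2k}) = {\tt sh_F^2}(B_{2k}) = \{(n+1)F_{2k+2} + \lfloor n/\phi\rfloor F_{2k+1} : n \geq 0\},
\]
which is the inductive step for all $k \geq 2$.

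It then remains to establish the two base cases $k = 1$ and $k = 2$ directly in {\tt Walnut}. Both are needed separately: Lemma \ref{sh1} requires $m > 3$, so it cannot be used to push the $k = 1$ formula into the $k = 2$ formula, and indeed ${\tt sh_F^2}$ does not send the $k=1$ expression to the $k=2$ expression. For each base case I would recognise $B_{2k}$ by a regular expression in least-significant-digit Chung-Graham form (with the smallest nonzero digit at the required even position), realise $\lfloor n/\phi\rfloor$ with {\tt noverphilsd}, and use the converter {\tt fibcg} to move between the two numeration systems; since $F_2 = F_1 = 1$, $F_4 = 3$ and $F_3 = 2$, the two target formulas are the linear expressions $n + 1 + \lfloor n/\phi\rfloor$ and $3n + 3 + 2\lfloor n/\phi\rfloor$, and an {\tt eval} statement can then check the equivalence between membership of $B_{2k}$ and representability by the formula. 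I expect the main obstacle to be exactly the shift mismatch: being disciplined about which operator acts on which object, and invoking Theorem \ref{feunion} to guarantee that every element being shifted lies in $F_{even}$, so that the clean Zeckendorf shift of Lemma \ref{sh1} may legitimately stand in for the Chung-Graham shift.
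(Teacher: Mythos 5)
Your proposal is correct and takes essentially the same approach as the paper: it likewise uses Theorem \ref{feunion} and Lemma \ref{fibcgsh} to identify ${\tt sh_{CG}^2}$ with ${\tt sh_F^2}$ off $B_2$, reduces to the base cases $k=1,2$ via (\ref{shakcg}) and a double application of Lemma \ref{sh1}, and verifies those cases in {\tt Walnut} with exactly the linear formulas $x = n+1+\lfloor n/\phi \rfloor$ and $x = 3n+3+2\lfloor n/\phi \rfloor$. Your explicit justification of why both base cases are needed (Lemma \ref{sh1} requires $m>3$, and the shift identification fails on $B_2$) is a useful expository addition but does not alter the argument.
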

\begin{proof}
If $n \not \in B_{2}$, then $n \in F_{even}$ by theorem \ref{feunion}. Hence, from lemma \ref{fibcgsh}, shifting the Chung-Graham representation of $n$ two places to the right produces the same result as shifting the Zeckendorf representation of $n$ two places to the right. By (\ref{shakcg}) and lemma \ref{sh1}, we only need to prove the result for $k=1, 2$. This can be done using {\tt Walnut}. The sets $B_{2}$ and $B_{4}$ can be defined using regular expressions.
\begin{verbatim}
reg b2 lsd_cg "[1|2][0|1|2]*":
reg b4 lsd_cg "00[1|2][0|1|2]*":
\end{verbatim}
\bigskip
The $k=1$ case follows from the proposition below:
\begin{verbatim}
eval test1 "?lsd_fib Ax (En,y $noverphilsd(n,y) & x = n+1+y)  
    <=> (Ez $fibcg(x, ?lsd_cg z) & $b2(?lsd_cg z) )":
\end{verbatim}
The $k=2$ case follows from the proposition below:
\begin{verbatim}
eval test2 "?lsd_fib Ax (En,y $noverphilsd(n,y) & x = 3*n+3+2*y) 
     <=> (Ez $fibcg(x, ?lsd_cg z) & $b4(?lsd_cg z))":
\end{verbatim}
\end{proof}

\bigskip

\begin{theorem}[Chu, Kanji and Vasseur\cite{Chu:2025aa}  Theorem 1.3]
For $k \geq 1$, the set of all positive integers whose Chung-Graham representation does not contain $F_{2k}$ or $2 F_{2k}$ is given by
$$
\{ j : 1 \leq j < F_{2k} \} \,\, \cup \,\, \{ \,\, j + (n+1) F_{2m} + \lfloor n/\phi \rfloor F_{2m-1} : 0 \leq j < F_{2k} , \,\, n \geq 0, \,\, m > k \,\, \} .
$$
\end{theorem}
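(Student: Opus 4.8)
The plan is to mirror the proof of the Zeckendorf ``avoid $F_k$'' theorem from Section \ref{zecksec}, with the Chung--Graham sets $B_{2m}$ playing the role of the sets $U_k$ and with Theorem \ref{ckv43} supplying the explicit formula. An integer $N$ whose Chung--Graham representation contains neither $F_{2k}$ nor $2F_{2k}$ is exactly one whose digit in position $2k-2$ (the coefficient of $F_{2k}$) equals $0$. Since the representation is read least significant digit first and a $0$ occupies position $2k-2$, I would split the digit string there and write $N = j + x$, where $j$ is the ``lower'' integer assembled from $F_2,\dots,F_{2k-2}$ and $x$ is the ``upper'' integer assembled from $F_{2k+2},F_{2k+4},\dots$. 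The lower part satisfies $0 \le j < F_{2k}$, because the integers in $[0,F_{2k})$ are precisely those whose Chung--Graham representation uses only terms below $F_{2k}$.

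Next I would classify the upper part. Either $x = 0$, or the smallest even Fibonacci number occurring in $x$ is some $F_{2m}$ with $m > k$, i.e.\ $x \in B_{2m}$. In the first case $N = j$, and since we are enumerating positive integers this contributes exactly $\{\, j : 1 \le j < F_{2k} \,\}$, the first set in the statement. In the second case Theorem \ref{ckv43} gives $x = (n+1)F_{2m} + \lfloor n/\phi \rfloor F_{2m-1}$ for a unique $n \ge 0$, and combining this with the free lower part $0 \le j < F_{2k}$ yields precisely the parametrized family in the statement. Conversely, every $j$ with $1 \le j < F_{2k}$ and every sum $j + x$ with $x \in B_{2m}$, $m>k$, has its $F_{2k}$-digit equal to $0$, so both inclusions hold; the two sets are disjoint because the first consists of integers $< F_{2k}$ while the second consists of integers $\ge F_{2m} \ge F_{2k+2}$.

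The step requiring genuine care is justifying that the decomposition $N = j + x$ is legitimate \emph{inside} the Chung--Graham system, and not merely as an arithmetic identity. Two checks are needed. First, no carrying occurs when forming $j + x$: this is immediate since $j < F_{2k} \le F_{2m}$ is smaller than every term appearing in $x$, so the digit strings do not interact. Second, the concatenated digit string must obey the Chung--Graham constraint that any two $2$'s be separated by a $0$ in an even position. This is exactly where the excluded position $2k-2$ earns its keep: if the lower block ends with a $2$ at some position $\le 2k-4$ and the upper block begins with a $2$ at some position $\ge 2k$, then the forced $0$ in position $2k-2$ lies strictly between them and supplies the required separating zero, so the concatenation is a valid Chung--Graham representation of $j+x$. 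I expect this verification of the $2$-separation condition across the boundary to be the main obstacle; once it is established, the theorem follows directly from Theorem \ref{ckv43}. As an independent check, the claimed identity could also be confirmed in {\tt Walnut} by constructing an automaton that accepts Chung--Graham representations with a $0$ in position $2k-2$ and comparing it against the formula via the converter {\tt fibcg}.
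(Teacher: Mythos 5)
Your proposal is correct and follows essentially the same route as the paper: the paper's (two-sentence) proof likewise decomposes such an integer as $j + x$ with $0 \leq j < F_{2k}$ and $x \in B_{2m}$ for some $m > k$, handles the integers below $F_{2k}$ separately, and then invokes Theorem \ref{ckv43}. Your additional verification that the split is legitimate inside the Chung--Graham system --- no interaction between the blocks, and the forced $0$ in position $2k-2$ supplying the separating zero between any $2$'s on either side of the boundary --- is precisely the detail the paper leaves implicit, so you have simply filled in its argument rather than diverged from it.
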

\begin{proof}
Apart from the integers smaller than $F_{2k}$, integers whose Chung-Graham representation does not contain $F_{2k}$ or $2 F_{2k}$ are of the form $j + x$ where $0 \leq j < F_{2k}$ and $x \in B_{2m}$ for some $m > k$. The result then follows from theorem \ref{ckv43}.
\end{proof}

\bigskip

We now consider which values of $n$ in theorem \ref{ckv43} produce integers in $B_{2k}^{(1)}$ and which values of $n$ produce integers in $B_{2k}^{(2)}$.

\bigskip

\begin{theorem}
When $n = 0$ or $n = \lfloor m \phi \rfloor + 1$ for some integer $m \geq 1$, the integer produced by the formula in theorem \ref{ckv43} is in $B_{2k}^{(1)}$. When $n = \lfloor m \phi^2 \rfloor + 1$ for some integer $m \geq 0$, the integer produced by the formula in theorem \ref{ckv43} is in $B_{2k}^{(2)}$.
\end{theorem}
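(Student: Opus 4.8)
The plan is to first reduce to the case $k=1$. Whether the integer produced by the formula in theorem \ref{ckv43} lies in $B_{2k}^{(1)}$ or $B_{2k}^{(2)}$ is decided by its least significant nonzero Chung-Graham digit, and this digit is left unchanged by ${\tt sh_{CG}^2}$, which merely prepends two zeros; hence ${\tt sh_{CG}^2}(B_{2k}^{(j)}) = B_{2k+2}^{(j)}$ for $j \in \{1,2\}$. Exactly as in the proof of theorem \ref{ckv43} — using (\ref{shakcg}), lemma \ref{fibcgsh} to replace ${\tt sh_{CG}^2}$ by ${\tt sh_F^2}$ on $F_{even}$, and lemma \ref{sh1} applied twice — the double shift carries the parameter-$n$ element $(n+1)F_{2k} + \lfloor n/\phi\rfloor F_{2k-1}$ to the parameter-$n$ element $(n+1)F_{2k+2} + \lfloor n/\phi\rfloor F_{2k+1}$. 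So the type depends only on $n$, and it suffices to analyse $k=1$.

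For $k=1$ the formula becomes $v_n := (n+1)F_2 + \lfloor n/\phi\rfloor F_1 = n + 1 + \lfloor n/\phi\rfloor$, and by theorem \ref{ckv43} the assignment $n \mapsto v_n$ is a strictly increasing enumeration of $B_2$. My key claim is a digit characterisation: for $v \in B_2$ one has $v \in B_2^{(2)}$ if and only if $v - 1 \in B_2$. Indeed, if $(v)_{CG} = 2\,0\cdots$ then $(v-1)_{CG} = 1\,0\cdots$ is still a legal Chung-Graham representation (lowering the leading $2$ to a $1$ can only relax the spacing condition on the $2$'s), so $v - 1 \in B_2$; and if $(v)_{CG} = 1\,0\cdots$ then $(v-1)_{CG} = 0\,0\cdots$ has leading digit $0$, so $v - 1 \notin B_2$. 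Thus, treating the base case $v_0 = 1 \in B_2^{(1)}$ separately, for $n \geq 1$ we get $v_n \in B_2^{(1)}$ exactly when $v_n - 1 \notin B_2$.

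Next I would translate this into a statement about Beatty sequences. Since $v_n - v_{n-1} = 1 + \bigl(\lfloor n/\phi\rfloor - \lfloor (n-1)/\phi\rfloor\bigr) \in \{1,2\}$, the sequence $(v_n)$ is strictly increasing with all gaps in $\{1,2\}$; therefore, for $n \geq 1$, $v_n - 1 \in B_2$ precisely when $v_{n-1} = v_n - 1$, i.e. when the gap equals $1$, i.e. when $\lfloor n/\phi\rfloor = \lfloor (n-1)/\phi\rfloor$. A gap of $2$ occurs exactly when some $j\phi$ with $j \geq 1$ lies in $(n-1, n]$; as $\phi$ is irrational this forces $\lfloor j\phi\rfloor = n-1$, i.e. $n = \lfloor j\phi\rfloor + 1$. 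By Beatty's theorem $\{\lfloor j\phi\rfloor : j \geq 1\}$ and $\{\lfloor m\phi^2\rfloor : m \geq 1\}$ partition the positive integers, so for $n \geq 2$ the gap is $1$ (equivalently $n$ is not of the form $\lfloor j\phi\rfloor + 1$) exactly when $n = \lfloor m\phi^2\rfloor + 1$ for some $m \geq 1$; the boundary values $n = 0$ (type $1$) and $n = 1 = \lfloor 0\cdot\phi^2\rfloor + 1$ (gap $1$, hence type $2$) are checked by hand. Assembling the cases yields the stated descriptions of $B_{2k}^{(1)}$ and $B_{2k}^{(2)}$; note that the two index sets are complementary in $\mathbb{N}$, so proving the $B^{(1)}$ half automatically delivers the $B^{(2)}$ half.

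The main obstacle is the reduction bookkeeping in the first step: one must justify carefully both that ${\tt sh_{CG}^2}$ fixes the least significant nonzero digit and that it sends the level-$k$ parameter-$n$ element to the level-$(k+1)$ parameter-$n$ element, which is where lemmas \ref{fibcgsh} and \ref{sh1} enter exactly as in theorem \ref{ckv43}; the Beatty accounting at $n = 0, 1$ also needs the small-case verification. In keeping with the rest of the paper, the $k=1$ equivalences could instead be discharged in {\tt Walnut}: build automata for $B_2^{(1)}$ and $B_2^{(2)}$ from regular expressions such as {\tt "1[0|1|2]*"} and {\tt "2[0|1|2]*"} in {\tt lsd\_cg}, describe the two index sets with {\tt phinlsd} (using $\lfloor m\phi^2\rfloor = \lfloor m\phi\rfloor + m$, since $\phi^2 = \phi + 1$) and {\tt noverphilsd}, convert $v_n$ via {\tt fibcg}, and check that the resulting predicates evaluate to {\tt TRUE}.
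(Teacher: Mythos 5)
Your base-case analysis for $k=1$ is correct and is a genuinely different, more elementary route than the paper's: the paper discharges the base cases entirely in \texttt{Walnut} (building \texttt{b21}, \texttt{b41} and checking two \texttt{eval} propositions), whereas you prove by hand that for $v\in B_2$ one has $v\in B_2^{(2)}$ iff $v-1\in B_2$ (lowering a leading CG digit $2$ to $1$, or $1$ to $0$, stays a valid representation), and then read off the type from the gap structure of $v_n=n+1+\lfloor n/\phi\rfloor$ via Beatty's theorem. That part checks out, including the boundary values $n=0,1$.

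However, your reduction step has a genuine gap exactly where the paper is careful: you reduce to the single base case $k=1$, justifying parameter preservation under the double shift by ``lemma \ref{fibcgsh} to replace ${\tt sh_{CG}^2}$ by ${\tt sh_F^2}$ on $F_{even}$, and lemma \ref{sh1} applied twice.'' This fails for the step $k=1\to k=2$: lemma \ref{fibcgsh} only equates the two double shifts on $F_{even}$, and $B_2\not\subseteq F_{even}$ (by theorem \ref{feunion}, only $\mathbb{N}\setminus B_2$ is guaranteed to lie in $F_{even}$; e.g.\ $2\in B_2\cap F_{odd}$, and indeed ${\tt sh_F^2}(2)=5\notin B_4$ while ${\tt sh_{CG}^2}(2)=6=2F_4$); moreover lemma \ref{sh1} requires $m>3$, so it cannot be applied starting from $(n+1)F_2+\lfloor n/\phi\rfloor F_1$ (check: ${\tt sh_F}(2)=3\neq 4=2F_3$). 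This is precisely why the paper's proof establishes \emph{two} base cases, $k=1$ and $k=2$, and only runs the shift argument from $k\geq 2$ upward, where $B_{2k}\subseteq\mathbb{N}\setminus B_2\subseteq F_{even}$. Your gap is fixable in two ways: either add the $k=2$ base case as the paper does (your gap/Beatty argument adapts, or use \texttt{Walnut}), or observe that no appeal to lemmas \ref{fibcgsh} and \ref{sh1} is needed at all --- since theorem \ref{ckv43} already gives a strictly increasing enumeration of each $B_{2k}$ by the parameter $n$, and ${\tt sh_{CG}^2}$ is a strictly increasing bijection from $B_{2k}$ onto $B_{2k+2}$ preserving the least significant nonzero digit, it must carry the $n$-th element to the $n$-th element for every $k\geq 1$. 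With either repair your proof is complete.
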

\begin{proof}
We note that 
$$
\{  \lfloor m*\phi \rfloor  + 1 : m \geq 1 \} \,\, \cup \,\,  \{   \lfloor m \phi^2 \rfloor + 1 : m \geq 0 \} = \mathbb{N}
$$
and the two sets on the left hand side are disjoint. Therefore, we only need to prove the result for $n = \lfloor m \phi \rfloor + 1$. By lemma \ref{sh1}, shifting integers of the form  $(n+1) F_m + F_{m-1} \lfloor n/\phi \rfloor$ does not change the value of $n$. Since $B_{2k} \subseteq F_{even}$ when $k > 1$ by theorem \ref{feunion}, (\ref{shakcg}) and lemma \ref{fibcgsh} imply that it is sufficient to prove the theorem when $k = 1, 2$.
We first define the sets $B_{2}^{(1)}$ and $B_{4}^{(1)}$ using regular expressions.
\begin{verbatim}
reg b21 lsd_cg "1(0|1|2)*":
reg b41 lsd_cg "001(0|1|2)*":
\end{verbatim}
For the $k = 1$ case, the following automaton accepts values of $n$ in theorem \ref{ckv43} that produce integers in $B_{2}^{(1)}$:
\begin{verbatim}
def thm161 "?lsd_fib Ex,y,z $noverphilsd(n,y) & x = (n+1) + y
    & $fibcg(x, ?lsd_cg z) & $b21(?lsd_cg z)":
\end{verbatim}
The following proposition then shows that $n$ produces an integer in $B_{2}^{(1)}$ if and only if $n=0$ or $n = \lfloor m*\phi \rfloor  + 1$ for some integer $m \geq 1$.
\begin{verbatim}
eval test "?lsd_fib An $thm161(n) <=> 
    (n=0|(Em (m>0) & $phinlsd(m,n-1)))":
\end{verbatim}
For the $k = 2$ case, the following automaton accepts values of $n$ in theorem  \ref{ckv43} that produce integers in $B_{4}^{(1)}$:
\begin{verbatim}
def thm162 "?lsd_fib Ex,y,z $noverphilsd(n,y) & x = (3*n+3) + 2*y
    & $fibcg(x, ?lsd_cg z) & $b41(?lsd_cg z)":
\end{verbatim}
The following proposition then shows that $n$ produces an integer in $B_{4}^{(1)}$ if and only if $n=0$ or $n = \lfloor m*\phi \rfloor  + 1$ for some integer $m \geq 1$.
\begin{verbatim}
eval test "?lsd_fib An $thm162(n) <=> 
    (n=0|(Em (m>0) & $phinlsd(m,n-1)))": 
\end{verbatim}
This completes the proof.
\end{proof}

\bigskip

\begin{theorem}[Bustos et al. \cite{Bustos:2025aa} Theorem 1.2]
For $k \geq 1$, the set of positive integers that have $F_{2k}$ in both of their Zeckendorf and Chung-Graham representations is 
$$
\{ \,\, j + n F_{2k} + \lfloor (n-1) \phi \rfloor F_{2k+1} : 0 \leq j < F_{2k-1} , \,\, n > 0  \} .
$$
\end{theorem}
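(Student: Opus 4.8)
The plan is to split an integer that contains $F_{2k}$ in its Zeckendorf representation into a low part and a core, reduce the statement to the base cases $k=1$ and $k=2$ by means of the double shift, and then discharge those base cases in {\tt Walnut}. First I would record the Zeckendorf decomposition. If $x$ contains $F_{2k}$ in its Zeckendorf representation, then, since consecutive Fibonacci numbers cannot both occur, $F_{2k-1}$ is absent, so the digits below $F_{2k}$ form an integer $j$ with $0\le j<F_{2k-1}$ (using $F_2+F_4+\cdots+F_{2k-2}=F_{2k-1}-1$), and $y:=x-j$ has $F_{2k}$ as its smallest Zeckendorf Fibonacci number, i.e. $y\in A_{2k}$. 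Hence the integers containing $F_{2k}$ in their Zeckendorf representation are exactly $\{\,j+y:0\le j<F_{2k-1},\ y\in A_{2k}\,\}$. The theorem then amounts to keeping only those $x$ that also contain $F_{2k}$ in their Chung-Graham representation, and to showing that the surviving cores $y$ are precisely the values $n F_{2k}+\lfloor(n-1)\phi\rfloor F_{2k+1}$ with $n\ge 1$.

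For the reduction in $k$ I would work with the core term $H_k(n):=n F_{2k}+\lfloor(n-1)\phi\rfloor F_{2k+1}$. Writing it as $H_k(n)=((n-1)+1)F_{2k}+\lfloor(n-1)\phi\rfloor F_{2k+1}$ puts it in the shape of Lemma \ref{sh4} with $m=2k$ and parameter $n-1$; applying that lemma twice, with $m=2k$ and then $m=2k+1$, gives ${\tt sh_F^2}(H_k(n))=H_{k+1}(n)$. Since $A_{2k}\subseteq F_{even}$ by (\ref{foe}), Lemma \ref{fibcgsh} guarantees that ${\tt sh_{CG}^2}$ and ${\tt sh_F^2}$ agree on these cores, so the double shift simultaneously carries ``smallest Zeckendorf Fibonacci $F_{2k}$'' to ``$F_{2k+2}$'' and ``$F_{2k}$ present in Chung-Graham'' to ``$F_{2k+2}$ present''. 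Together with (\ref{shak}) and (\ref{shakcg}) this reduces the characterisation of the cores to $k=1$ and $k=2$. It may also help to note that the cores coincide with $A_{2k}\cap B_{2k}^{(1)}$, linking the argument to Lemma \ref{diff} and to the preceding $B_{2k}^{(1)}$/$B_{2k}^{(2)}$ description, since the Chung-Graham coefficient of $F_{2k}$ in a core is forced to equal $1$.

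For the base cases I would proceed as in Theorem \ref{ckv43} and its successor: build regular expressions for $A_{2k}$ and for ``$F_{2k}$ occurs in the Chung-Graham representation'', convert between the two systems with {\tt fibcg}, express $H_1(n)$ and $H_2(n)$ through {\tt phinlsd} (here $F_2=1$ and $F_4=3$ are constants, so the now-fixed scaling becomes expressible in {\tt Walnut}), and verify the two equivalences. The admissible offsets $j$ run over $\{0\}$ for $k=1$ and over $\{0,1\}$ for $k=2$, and can be absorbed directly into the same propositions.

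The step I expect to be the main obstacle is justifying that the low part $j$ and the core $y$ may be concatenated without interaction in \emph{both} numeration systems, so that $x=j+y$ contains $F_{2k}$ in Chung-Graham exactly when $y$ does. The Zeckendorf side is automatic, but in the Chung-Graham system the carry rule ``between two $2$'s at even positions there is an intervening $0$'' could in principle be violated at the junction $F_{2k}$. One must check that the bound $j<F_{2k-1}$, which forces the coefficient of $F_{2k-2}$ to be at most $1$, rules this out and keeps the concatenation a legitimate Chung-Graham representation. I would establish this carry-freeness either as a separate lemma or by folding it into a $k$-uniform {\tt Walnut} verification, since it is the one place where the two representations genuinely interact rather than merely being shifted in parallel, and it is exactly what lets the offset $j$ be treated uniformly while the shift propagates the cores from the base cases to all $k$.
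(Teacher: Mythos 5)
Your proposal is correct and takes essentially the same route as the paper: decompose each such integer into an offset $j$ with $0 \leq j < F_{2k-1}$ plus a core in $A_{2k} \cap B_{2k}^{(1)}$, propagate the closed form for the cores across $k$ by Lemma \ref{sh4} together with Lemma \ref{fibcgsh} (valid because $A_{2k} \subseteq F_{even}$), and discharge the base case in {\tt Walnut}. The only deviations are minor: since $A_{2} \subseteq F_{even}$ already, the shift argument works from $k=1$ onward and your second base case $k=2$ is redundant (unlike Theorem \ref{ckv43}, where $B_2 \not\subseteq F_{even}$ genuinely forces two base cases), and the junction carry-freeness you flag as the main obstacle is precisely the role Lemma \ref{diff} plays in the paper's proof, where the gap $\lvert x-y \rvert \geq F_{2k}$ forces the Zeckendorf and Chung--Graham decompositions $j_1+x = j_2+y$ to coincide --- indeed your explicit attention to the converse direction (that $j+x$ for $x \in A_{2k}\cap B_{2k}^{(1)}$ really does contain $F_{2k}$ in its Chung--Graham representation) covers a step the paper leaves implicit.
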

\begin{proof}
Integers $m$ satisfying the theorem can be written $m = j_1 + x = j_2 + y$, where $j_1, j_2 <  F_{2k-1}, x \in A_{2k}$ and $y \in B_{2k}^{(1)}$. So,
$$
0 = j_1 - j_2 + x - y .
$$
However, $\mid j_1 - j_2 \mid < F_{2k-1}$ and either $x = y$ or $\mid x - y \mid \geq F_{2k}$ by lemma \ref{diff}. So, we must have $j_1 = j_2$ and $x = y$. Hence, integers satisfying the theorem are of the form $j + x$, where $0 \leq j < F_{2k-1}$ and $x \in A_{2k} \cap B_{2k}^{(1)}$. We now show that 
\begin{equation}
\label{abinter}
A_{2k} \cap B_{2k}^{(1)}  = \{ \,\, n F_{2k} + \lfloor (n-1) \phi \rfloor F_{2k+1} : n > 0  \} .
\end{equation}
By lemma \ref{fibcgsh}, ${\tt sh_F^2} = {\tt sh_{CG}^2}$ on $A_{2k} \cap B_{2k}^{(1)}$ and so ${\tt sh_F^2} (A_{2k} \cap B_{2k}^{(1)}) = A_{2k+2} \cap B_{2k+2}^{(1)}$. Therefore, applying lemma \ref{sh4} we only need to show (\ref{abinter}) holds for $k=1$. We can do this with {\tt Walnut} using the automata {\tt a2} and {\tt b21} previously defined for the sets $A_{2}$ and $B_{2}^{(1)}$ respectively. The following proposition is TRUE.
\begin{verbatim}
eval test "?lsd_fib Ax (Ey $a2(x) & $fibcg(x,?lsd_cg y) & 
    $b21(?lsd_cg y)) <=> (En,z n>0 & $phinlsd(n-1,z) & x=n+2*z)":
\end{verbatim}
\end{proof}

\bigskip

\bibliographystyle{plain}
\begin{small}
\bibliography{CGZ}
\end{small}

\end{document}